\documentclass[11pt]{article}
\usepackage[margin=1in]{geometry}
\usepackage{amsmath,amssymb,amsthm,mathtools}
\usepackage{booktabs}
\usepackage{array}

\newtheorem{theorem}{Theorem}[section]
\newtheorem{lemma}[theorem]{Lemma}

\newtheorem{conjecture}[theorem]{Conjecture}
\theoremstyle{definition}
\newtheorem{definition}[theorem]{Definition}
\theoremstyle{remark}
\newtheorem{remark}[theorem]{Remark}

\newcommand{\tr}{\operatorname{tr}}
\newcommand{\wt}{\operatorname{wt}}
\newcommand{\Z}{\mathbb Z}
\newcommand{\N}{\mathbb N}

\title{Proof of the Tu--Deng Conjecture}
\author{Thomas W. Cusick$^a$ \footnote{email: cusick@buffalo.edu~ORCID 0000-0002-0087-8855}
\vspace{.5cm}\\
$^a$\small Department of Mathematics,
\small University at Buffalo\\
\small 244 Mathematics Bldg.,  Buffalo, NY 14260\\}

\begin{document}
\maketitle

\begin{abstract}
We give a complete proof of the 2011 Tu--Deng conjecture.  We begin from its
original modular pair-count formulation, prove an equivalent cyclic
Hamming weight-drop formulation, and establish the exact transfer identity that
connects this count with a two-variable matrix polynomial.  The proof then
reduces the conjecture to normalized inequalities for the coefficients of that
polynomial. A 2011 conjecture by the author which came to be called the Cusick 
Conjecture (it is a consequence of the Tu--Deng Conjecture) was proved by K. Cheng
in 2026.  The proof in the present paper extends the cyclic deletion ideas of Cheng.
The new ideas might be applicable to other problems.
\end{abstract}

\section{The Tu--Deng conjecture and its history}\label{sec:introduction}
For a nonnegative integer $n$, let $\wt(n)$ denote the Hamming weight of its
binary expansion, that is, the number of digits equal to $1$.  Fix $k\ge2$ and
define $M=2^k-1$.  For $1\le t<M$, define
\[
 S_{t,k}=\bigl\{(a,b)\in\{0,1,\ldots,M-1\}^2:
 a+b\equiv t\pmod M,\ \wt(a)+\wt(b)<k\bigr\}.
\]
The Tu--Deng conjecture is the following finite assertion.

\begin{conjecture}[Tu--Deng]\label{TD}
For every $k\ge2$ and every $1\le t<2^k-1$,
\[
 |S_{t,k}|\le 2^{k-1}.
\]
\end{conjecture}

Tu and Deng introduced this conjecture in their study of cryptographic Boolean
functions \cite{TuDeng}.  Algebraic immunity measures the least degree of a
nonzero Boolean function annihilating a given function or its complement and
is a standard measure of resistance to algebraic attacks.  Assuming the
conjecture, Tu and Deng obtained two classes of Boolean functions with optimal
algebraic immunity: one class consists of bent functions, while the other
consists of balanced functions with optimal algebraic degree and very high
nonlinearity.  They verified the conjecture computationally for $k\le29$
\cite{CusickLiStanica, TuDeng}.

Cusick, Li, and St\u anic\u a studied the counting problem directly
\cite{CusickLiStanica}.  Using three different
counting approaches, they proved the conjecture for many families of
parameters, including several cases controlled by the binary weight of $t$ or
of its complement.  Their analysis also emphasized a central difficulty: the
relevant pairs have a scattered distribution, so a direct exact enumeration
appears difficult in general. While working on \cite{CusickLiStanica}, the
author formulated the sum-of-digits conjecture stated in the next paragraph,
which came to be called the Cusick Conjecture.

A major asymptotic advance was obtained by Spiegelhofer and Wallner
\cite{SpiegelhoferWallner}.  They proved that the proportion of
$t\in\{1,\ldots,2^k-2\}$ for which the Tu--Deng Conjecture holds tends to $1$ as
$k\to\infty$.  They also showed that the Tu--Deng conjecture implies Cusick's
sum-of-digits conjecture (see \cite{DKS}), which asserts that for every positive integer $t$,
\[
 \operatorname{dens}\{n\ge0:\wt(n+t)\ge\wt(n)\}>\frac12.
\]
Thus the Tu--Deng problem is a stronger finite statement than the corresponding
sum-of-digits density conjecture.  In particular, Cusick's conjecture is a
corollary of the main result of the present paper (Theorem~\ref{thm:TD}).

Cheng later proved Cusick's conjecture by a cyclic deletion and first-exit argument
\cite{Cheng}.  His proof uses an exact deconvolution, finite stopped random
walks on principal subsequence ideals, and a marked-deletion count that forces
a one-sided median inequality.  Cheng's theorem does not by itself prove the
stronger Tu--Deng conjecture, but its method is the main conceptual stimulus
for the present proof: the normalized coefficient inequalities below (see 
\eqref{normu} and \eqref{normv})  are a cyclic algebraic analogue of the marked-deletion 
inequalities in the first-exit setting.

The proof proceeds as follows.  Section~\ref{sec:prefix} uses binary linear words and
various determinants defined in terms of the words and two variables $u$ and $v$.  Section~\ref{sec:setup}
introduces the cyclic polynomial $H_W$ and states the normalized coefficient
inequalities to be proved.  Sections~\ref{sec:local} and~\ref{sec:differential}
establish the local quotient identities and the positive differential
decomposition.  Section~\ref{sec:transfer} proves the pair-count equivalence and
states the cyclic transfer identity, which is proved in
Section~\ref{sec:proofA}.  Section~\ref{sec:deduction} proves the median
formula and the top-boundary formula, and then deduces the conjecture.  

\section{Binary words and some determinants}\label{sec:prefix}
For variables $u$ and $v$, define matrices
\[
 C_0=\begin{bmatrix}1&0\\u&v\end{bmatrix},\qquad
 C_1=\begin{bmatrix}u&v\\0&1\end{bmatrix}.
\]
We shall use linear binary words.  Let $V=\eta_0\eta_1\cdots\eta_{n-1}$ with each $\eta_i$ in $\{0,1\}$ be such
a word and define the \emph{prefix products} $P_i$ of $V$ by
\[
 P_0=I,\qquad P_m=C_{\eta_0}\cdots C_{\eta_{m-1}}
 \quad(1\le m\le n),\qquad N=P_n.
\]
Define two column vectors \( e_0=\begin{bmatrix}1\\0\end{bmatrix},~e_1=\begin{bmatrix}0\\1\end{bmatrix} \)
and define some columns by
\begin{equation}\label{AB}
 A_V=\sum_{\eta_m=1}P_me_0,
 \qquad
 B_V=\sum_{\eta_m=0}P_me_1,
 \qquad
 p_V=Ne_0,\quad q_V=Ne_1.
\end{equation}

\begin{theorem}[$\det(I-N)$]\label{prop:factor}
For every finite word $V$,
\begin{equation}\label{factor}
 \tr N-\det N=1+(u+v-1)\det(A_V,B_V).
\end{equation}
Consequently there is exactly one polynomial $H_V\in\Z[u,v]$ with
$\tr N-\det N=1+(u+v-1)H_V$, namely
\begin{equation}\label{factor2}
 H_V=\det(A_V,B_V);
\end{equation}
uniqueness holds because $\Z[u,v]$ is an integral domain and $u+v-1\ne0$.
\end{theorem}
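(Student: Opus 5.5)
The plan is to prove \eqref{factor} by induction on the length $n$ of $V$, appending one letter at a time. The key auxiliary fact I would establish first is a telescoping identity for the all-ones vector $\mathbf 1=e_0+e_1$. A direct check gives
\[
C_0\mathbf 1=\mathbf 1+(u+v-1)e_1,\qquad C_1\mathbf 1=\mathbf 1+(u+v-1)e_0 .
\]
Hence $P_{m+1}\mathbf 1-P_m\mathbf 1=(u+v-1)P_m e_{1-\eta_m}$. Summing over $m$ and comparing with \eqref{AB} (recall that $A_V$ collects $P_me_0$ at the letters $\eta_m=1$, and $B_V$ collects $P_me_1$ at the letters $\eta_m=0$) gives
\[
p_V+q_V-\mathbf 1=N\mathbf 1-\mathbf 1=(u+v-1)(A_V+B_V).
\]
This already explains why the factor $u+v-1$ appears.

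For the induction step, write $V'=V\eta$, so that $N'=NC_\eta$. If $\eta=1$, then $A_{V'}=A_V+p_V$ and $B_{V'}=B_V$; if $\eta=0$, then $A_{V'}=A_V$ and $B_{V'}=B_V+q_V$. The right side of \eqref{factor} therefore changes by $(u+v-1)\det(p_V,B_V)$ or by $(u+v-1)\det(A_V,q_V)$, respectively. On the left, $\det N'=\det N\cdot\det C_\eta$, where $\det C_0=v$ and $\det C_1=u$. The trace is explicit: for $\eta=1$,
\[
\tr(NC_1)=uN_{11}+vN_{21}+N_{22},
\]
and similarly for $\eta=0$. So each case reduces to a scalar identity. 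In the case $\eta=1$ it reads
\[
(u-1)N_{11}+vN_{21}-(u-1)\det N=(u+v-1)\det(p_V,B_V).
\]
The base case $n=0$ is immediate: $N=I$ and $A=B=0$, so both sides equal $1$.

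The main obstacle is that these step identities involve $A_V$ or $B_V$ individually, and the telescoping identity controls only $A_V+B_V$. I would therefore strengthen the induction hypothesis to carry two companion identities. The first expresses $(u+v-1)\det(p_V,B_V)$ as a polynomial in the entries of $N$. The second expresses $(u+v-1)\det(A_V,q_V)$ the same way. One of these can be traded for the other using the telescoping identity, because
\[
\det(p,A+B)=\det\!\bigl(p,(p+q-\mathbf 1)/(u+v-1)\bigr).
\]
I would guess the companion formulas by computing words of length $1$ and $2$. A natural candidate is that $(u+v-1)\det(p_V,B_V)$ equals $\det(p_V,q_V-e_1)$ plus a correction linear in $p_V$. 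I would then verify that the candidates propagate under $N\mapsto NC_\eta$ using the same bookkeeping as above.

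If the strengthened induction resists, the fallback is a direct expansion. By bilinearity, $\det(A_V,B_V)$ is a sum over pairs $m\ne m'$ of terms $\det(P_me_0,P_{m'}e_1)$. For $m<m'$ I would factor out $P_m$, obtaining $\det P_m\cdot\det\bigl(e_0,C_{\eta_m}\cdots C_{\eta_{m'-1}}e_1\bigr)$, and match the result against the expansion of $\tr N-\det N-1$ through the same telescoping identity. Finally, the uniqueness clause of the theorem follows exactly as stated: $\Z[u,v]$ is an integral domain and $u+v-1\neq0$, so $u+v-1$ is a non-zero-divisor and $H_V$ is determined by \eqref{factor}.
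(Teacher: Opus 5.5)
Your bookkeeping is correct: the base case, the updates $A_{V1}=A_V+p_V$ and $B_{V0}=B_V+q_V$, and the all-ones telescoping $N\mathbf 1-\mathbf 1=(u+v-1)(A_V+B_V)$ are all right. So is the reduction of the $\eta=1$ step to $(u-1)N_{11}+vN_{21}-(u-1)\det N=(u+v-1)\det(p_V,B_V)$, and of the $\eta=0$ step to $uN_{12}+(v-1)N_{22}-(v-1)\det N=(u+v-1)\det(A_V,q_V)$.

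But those two step identities \emph{are} the theorem, and the proposal never proves them. The strengthened induction hypothesis is left unspecified. The one candidate you suggest is wrong as stated: the true relation is $(u+v-1)\det(p,B)=(1-u)\det(p,q-e_1)+vN_{21}$. So the deviation from $\det(p,q-e_1)$ contains $-u\det N$ and is not a fixed linear form in the entries of $p$; words of length at most $2$ already rule that out. The bilinear-expansion fallback also stops exactly where the matching with $\tr N-\det N-1$ would have to happen. What you have is a correct reduction, not a proof.

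The missing idea is to telescope the whole identity matrix, not just $\mathbf 1$. Each $I-C_\eta$ has rank one: $I-C_1=e_0\theta_1$ and $I-C_0=e_1\theta_0$, with $\theta_1=\begin{bmatrix}1-u&-v\end{bmatrix}$ and $\theta_0=\begin{bmatrix}-u&1-v\end{bmatrix}$. Summing $P_m-P_{m+1}=P_m(I-C_{\eta_m})$ over $m$ gives
\[
I-N=A_V\theta_1+B_V\theta_0=\begin{bmatrix}A_V&B_V\end{bmatrix}\Theta,\qquad \Theta=\begin{bmatrix}1-u&-v\\-u&1-v\end{bmatrix},\quad \det\Theta=1-u-v .
\]
Taking determinants and using $\det(I-N)=1-\tr N+\det N$ yields \eqref{factor} at once, with no induction; this is the paper's proof.

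Your all-ones identity is just this factorization applied to $\mathbf 1$, since $\Theta\mathbf 1=(1-u-v)\mathbf 1$. Applying it to $e_0$ and $e_1$ instead gives $e_0-p=(1-u)A-uB$ and $e_1-q=-vA+(1-v)B$. These separate $A$ from $B$ and remove exactly the obstacle you identified.

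Your induction can also be closed. Carry the two identities $\rho p-(u-1)\det N=(u+v-1)\det(p,B)$ and $\rho q-v\det N=(u+v-1)\det(q,B)$, where $\rho=\begin{bmatrix}u-1&v\end{bmatrix}$. Both survive appending a letter, and the $\eta=0$ step follows from the second via your telescoping identity. But these are precisely what the column identities above deliver directly.
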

\begin{proof} Define 
\( \theta_1=\begin{bmatrix}1-u & -v\end{bmatrix},
~
\theta_0=\begin{bmatrix}-u & 1-v\end{bmatrix}. \)
Since
\[
 I-C_1=e_0\theta_1,\qquad I-C_0=e_1\theta_0,
\]
repeating the product gives
\begin{equation}\label{prod}
 I-N=A_V\theta_1+B_V\theta_0 = \begin{bmatrix}A_V&B_V\end{bmatrix} \begin{bmatrix}1-u&-v\\-u&1-v\end{bmatrix}.
\end{equation}
Taking determinants in \eqref{prod} yields
\[
 \det(I-N)=\det(A_V,B_V)
 \det\begin{bmatrix}1-u&-v\\-u&1-v\end{bmatrix}
 =-(u+v-1)\det(A_V,B_V)
\]
while, for any $2\times2$ matrix $N$,
\[
 \det(I-N)=1-\tr N+\det N .
\]
Comparing the two expressions gives \eqref{factor}.  If $H_V$ satisfies
$\tr N-\det N=1+(u+v-1)H_V$, then $(u+v-1)\bigl(H_V-\det(A_V,B_V)\bigr)=0$, so
cancelling the nonzero polynomial $u+v-1$ in the integral domain $\Z[u,v]$
proves \eqref{factor2}.
\end{proof}

Define six determinants (using $p = p_V, q = q_V, A=A_V, B=B_V$)
\begin{align}\label{sixstates}
 H&=\det(A,B), & \alpha&=\det(A,p), & \beta&=\det(A,q),\nonumber\\
 \gamma&=\det(p,B), & \delta&=\det(q,B), & d&=\det(p,q)=\det N.
\end{align}

\begin{theorem}[Nonnegative six-state recurrence]\label{prop:states}
For the empty word, $A_V$ and $B_V$ are empty sums and hence $0$, while
$N=I$, $p=e_0$ and $q=e_1$; consequently
\[
 H=\alpha=\beta=\gamma=\delta=0,\qquad d=\det(e_0,e_1)=1.
\]
Appending a $1$ gives
\begin{align}
 H'&=H+\gamma,& \alpha'&=u\alpha,&
 \beta'&=\beta+v\alpha+d,\nonumber\\
 \gamma'&=u\gamma,& \delta'&=\delta+v\gamma,& d'&=ud.\label{append1}
\end{align}
Appending a $0$ gives
\begin{align}
 H'&=H+\beta,& \alpha'&=\alpha+u\beta,&
 \beta'&=v\beta,\nonumber\\
 \gamma'&=\gamma+d+u\delta,& \delta'&=v\delta,& d'&=vd.\label{append0}
\end{align}
Consequently all six states have nonnegative integer coefficients of $u$ and $v$ for every word $V$.
\end{theorem}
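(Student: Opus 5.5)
We compute directly how appending a letter updates the data $N,p,q,A,B$, and then read off the new determinants by bilinearity and antisymmetry of $\det(\cdot,\cdot)$. The empty-word values are immediate from the definitions.

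\emph{Appending a $1$.} Let $V'=V1$ with $n=|V|$. Then $N'=NC_1$, so $p'=N'e_0=N(ue_0)=up$ and $q'=N(ve_0+e_1)=vp+q$. The new index $m=n+1$ has $\eta_n=1$, so
\[
A'=A+P_{n+1}e_0=A+N'e_0=A+up,\qquad B'=B .
\]
The new updates then follow by expansion:
\begin{itemize}
\item $H'=\det(A+up,B)=H+u\gamma$.
\item $\alpha'=\det(A+up,up)=u\alpha$.
\item $\beta'=\det(A+up,vp+q)=v\alpha+\beta+u\det(p,q)=\beta+v\alpha+ud$.
\item $\gamma'=\det(up,B)=u\gamma$.
\item $\delta'=\det(vp+q,B)=\delta+v\gamma$.
\item $d'=\det(up,vp+q)=ud$.
\end{itemize}
These differ from \eqref{append1} exactly by the factor $u$ in the $H'$ and $\beta'$ entries.

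This makes me suspect an indexing convention issue: the sum in \eqref{AB} might run over prefixes $P_m$ with $\eta_m=1$, meaning the letter at position $m$ comes \emph{after} the prefix. Under that reading, appending $\eta_n=1$ adds $P_ne_0=Ne_0=p$, the old $p$, not the new one. Then $A'=A+p$, which gives $H'=H+\gamma$ and $\beta'=\det(A+p,vp+q)=\beta+v\alpha+d$, matching \eqref{append1}. I would adopt this reading, with $m$ ranging over $0,\dots,n-1$, which is consistent with $\eta_m$ being defined only for $m\le n-1$. The remaining entries then check out:
\begin{itemize}
\item $\alpha'=\det(A+p,up)=u\alpha$, since $\det(p,p)=0$.
\item $\gamma'=\det(up,B)=u\gamma$.
\item $\delta'=\det(vp+q,B)=\delta+v\gamma$.
\item $d'=ud$.
\end{itemize}

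\emph{Appending a $0$.} Here $N'=NC_0$, so $p'=N(e_0+ue_1)=p+uq$ and $q'=N(ve_1)=vq$. Also $B'=B+q$ and $A'=A$. Then:
\begin{itemize}
\item $H'=H+\det(A,q)=H+\beta$.
\item $\alpha'=\det(A,p+uq)=\alpha+u\beta$.
\item $\beta'=v\beta$.
\item $\gamma'=\det(p+uq,B+q)=\gamma+\det(p,q)+u\delta+u\det(q,q)=\gamma+d+u\delta$.
\item $\delta'=\det(vq,B+q)=v\delta$.
\item $d'=vd$.
\end{itemize}
This matches \eqref{append0}.

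\emph{Nonnegativity.} Induct on the length of the word. All right-hand sides in \eqref{append1} and \eqref{append0} are sums of states multiplied by $1$, $u$ or $v$, so nonnegativity of the coefficients is preserved from the base values $(0,0,0,0,0,1)$.

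The only real obstacle is pinning down the index convention in \eqref{AB}, and hence whether $A$ picks up the old or the new $p$. I would resolve it by reading $\eta_m$ as the letter following the prefix $P_m$, confirming this against \eqref{prod}, which telescopes $I-N=\sum_m P_m(I-C_{\eta_m})$, and then carrying out the bilinear expansions above.
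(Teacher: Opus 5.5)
Your final argument is correct and is the same as the paper's: you record $A'=A+p$, $B'=B$, $p'=up$, $q'=vp+q$ for an appended $1$ (and $A'=A$, $B'=B+q$, $p'=p+uq$, $q'=vq$ for an appended $0$), expand the determinants bilinearly, and induct on length for nonnegativity. Your opening computation with $A'=A+up$ came from misreading the definition, not from an ambiguous convention: $P_m=C_{\eta_0}\cdots C_{\eta_{m-1}}$ omits $\eta_m$, so the term contributed by the new letter $\eta_n$ is literally $P_ne_0=Ne_0=p$, and that first list of updates should simply be dropped.
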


\begin{proof}
If a $1$ is appended, then
\[
 A'=A+p,\qquad B'=B,\qquad p'=up,\qquad q'=vp+q.
\]
Substitution into the determinants in \eqref{sixstates} gives \eqref{append1}.  If a $0$ is appended, then
\[
 A'=A,\qquad B'=B+q,\qquad p'=p+uq,\qquad q'=vq,
\]
which gives \eqref{append0}.  Every right side is obtained from earlier states using addition and multiplication by $u$ or $v$.
\end{proof}

In particular,
\begin{equation}\label{pos}
 \gamma_V,\delta_V\in\N[u,v],
\end{equation}
where $\N[u,v]$ is the set of all polynomials in $u, v$ with coefficients in $\N =$ the nonnegative integers.

\section{Cyclic polynomial setup and the normalized inequalities}\label{sec:setup}
For $k>1$ define the binary word, regarded as a cyclic word (so the final digit is adjacent to the first digit),
\[
 W=\varepsilon_0\varepsilon_1\cdots\varepsilon_{k-1},
 \qquad \varepsilon_i\in\{0,1\}.
\]
Define $r=$ number of $1$'s in $W$, $z=k-r=$ number of $0$'s in $W$ and
 $M_W=M_W[u,v]=C_{\varepsilon_0}C_{\varepsilon_1}\cdots C_{\varepsilon_{k-1}}.$

Define
\begin{equation}\label{Qdef}
 Q_W(u,v)=\tr M_W(u,v)-u^rv^z.
\end{equation}
When $u+v=1$, every row of $C_0$ and of $C_1$ sums to $1$, so $1$ is an eigenvalue of $M_W$; since $\det C_1=u$ and $\det C_0=v$ give $\det M_W=u^rv^z$, the other eigenvalue is $u^rv^z$, and therefore $Q_W=1$ on the line $u+v=1$.  (No nonnegativity of the entries is used here, and indeed the entries are not all nonnegative on the whole line $u+v=1$; only the row sums matter.)  Hence there is a unique polynomial $H_W\in\Z[u,v]$ (cyclic by properties of the trace function)  satisfying
\begin{equation}\label{Hdef}
 Q_W(u,v)=1+(u+v-1)H_W(u,v),
 \qquad H_W(u,v)=\sum_{i,j\ge0}h_{i,j}(W)u^iv^j.
\end{equation}

The nonnegative six-state recurrence in Theorem~\ref{prop:states} implies
\begin{equation}\label{Hpositive}
 h_{i,j}(W)\in\N.
\end{equation}
Indeed, since $\det C_1=u$ and $\det C_0=v$, we have $\det M_W=u^rv^z$, so that
$Q_W=\tr N-\det N$ with $N=M_W$.  Reading $W$ as a \emph{linear} word, $N$ is
exactly the final matrix of that word, so Theorem~\ref{prop:factor} gives
$H_W=\det(A_W,B_W)$, and Theorem~\ref{prop:states} then gives
$H_W\in\N[u,v]$, which is \eqref{Hpositive}.  (Thus the cyclic quantity $H_W$
defined by \eqref{Hdef} coincides with the linear-word quantity of
Theorem~\ref{prop:factor}.)
The important coefficient inequalities to be proved (in Theorem \ref{thm:normal} below) are
\begin{align} 
 i h_{i,j}(W)&\le(i+j)h_{i-1,j}(W)\qquad(i\ge1,\ j\ge0),\label{normu}\\
 j h_{i,j}(W)&\le(i+j)h_{i,j-1}(W)\qquad(i\ge0,\ j\ge1).\label{normv}
\end{align}
Only the diagonal case (the case $i=j=m$ in Theorem \ref{thm:normal}; see \eqref{weak}) 
\begin{equation}\label{diag}
 h_{m,m}(W)\le2h_{m-1,m}(W)\qquad(m\ge1)
\end{equation}
is needed for Tu--Deng.  
Another way to look at this is to say that the normalized coefficient
\[
 h_{i,j}/\binom{i+j}{i}
\]
must not increase when a one or, symmetrically, a zero, is added to it.

\section{Two local quotient identities}\label{sec:local}
Define
\[
 s=u+v-1,\qquad \rho=\begin{bmatrix}u-1&v\end{bmatrix}.
\]

\begin{lemma}[Local quotient identities]\label{lem:local}
For every finite word $V$ with final matrix $N$ we have
\begin{align}
 \rho Ne_0-(u-1)\det N&=s\gamma_V,\label{local1}\\
 \rho Ne_1-v\det N&=s\delta_V.\label{local0}
\end{align}
\end{lemma}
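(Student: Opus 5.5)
We prove \eqref{local1} and \eqref{local0} together by induction on the length of $V$. The induction runs along the same appending steps as Theorem~\ref{prop:states}. Write
\[
L_1(V)=\rho p_V-(u-1)\det N,\qquad L_0(V)=\rho q_V-v\det N .
\]
The claim is that $L_1=s\gamma_V$ and $L_0=s\delta_V$. Both identities must be carried together, because appending a $0$ mixes $p$ with $q$ and appending a $1$ mixes $q$ with $p$. This matches the coupling of $\gamma$ and $\delta$ in \eqref{append1}--\eqref{append0}.

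\textbf{Base case.} For the empty word we have $p=e_0$, $q=e_1$ and $\det N=1$. Then $\rho e_0=u-1$ and $\rho e_1=v$, so $L_1=L_0=0$. This agrees with $\gamma=\delta=0$ from Theorem~\ref{prop:states}.

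\textbf{Appending a $1$.} From the proof of Theorem~\ref{prop:states}, the new quantities are $p'=up$, $q'=vp+q$ and $d'=ud$. The first identity scales:
\[
L_1'=u\rho p-(u-1)ud=uL_1=us\gamma=s\gamma'.
\]
For the second, substitute the induction hypotheses $\rho p=s\gamma+(u-1)d$ and $\rho q=s\delta+vd$:
\[
L_0'=v\rho p+\rho q-uvd=s(v\gamma+\delta)+d\bigl(v(u-1)+v-uv\bigr).
\]
The bracket vanishes, so $L_0'=s(\delta+v\gamma)=s\delta'$.

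\textbf{Appending a $0$.} Now $p'=p+uq$, $q'=vq$ and $d'=vd$. The second identity simply scales: $L_0'=vL_0=s\delta'$. For the first, substituting the hypotheses gives
\[
L_1'=s\gamma+(u-1)d+u\bigl(s\delta+vd\bigr)-(u-1)vd
=s(\gamma+u\delta)+d\bigl((u-1)+uv-(u-1)v\bigr).
\]
The bracket simplifies to $u+v-1=s$. Hence $L_1'=s(\gamma+d+u\delta)=s\gamma'$, which is \eqref{append0}.

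No real obstacle is expected. The one point needing care is the mixed cases (the $q$-identity under a $1$ and the $p$-identity under a $0$). There the $d$-terms must collapse to exactly $0$ or to exactly $s\,d$, respectively, and this works precisely because $s=u+v-1$.
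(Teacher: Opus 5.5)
Your coupled induction is correct. The base case and all four update checks go through, including the two mixed cases where the coefficient of $d$ is $v(u-1)+v-uv=0$ and $(u-1)+uv-(u-1)v=s$. You are also right that the two identities must be proved simultaneously.

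The paper does not induct. It takes the two columns of the factorization \eqref{prod}, namely $e_0-p=(1-u)A-uB$ and $e_1-q=-vA+(1-v)B$. It pairs each with $p$ (respectively $q$) under the determinant to get closed forms for the entries of $N$, e.g.\ $c=-(1-u)X+u\gamma$ and $a-\det N=-vX+(1-v)\gamma$, where $X=\det(p,A)$ is auxiliary. Substituting into $\rho p-(u-1)\det N$ cancels $X$ and leaves $\gamma$ multiplied by $uv-(1-u)(1-v)=s$. The second identity works the same way with $Y=\det(q,A)$.

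The paper's route proves each identity on its own. It also shows that the factor $s$ is $-\det\begin{bmatrix}1-u&-v\\-u&1-v\end{bmatrix}$, the same source of $s$ as in Theorem~\ref{prop:factor}. Your route uses only the append rules already established in Theorem~\ref{prop:states} and needs no auxiliary determinants $\det(p,A)$, $\det(q,A)$. It presents the lemma as a pair of invariants preserved by the six-state recurrence, at the price of carrying both statements through the induction together.
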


\begin{proof}
Write
\[
 p=Ne_0=\begin{bmatrix}a\\c\end{bmatrix},\qquad q=Ne_1=\begin{bmatrix}b\\d\end{bmatrix},
 \qquad \Delta=\det N = \det (p,q),
\]
and abbreviate $A=A_V$, $B=B_V$.  The two columns obtained from \eqref{prod} are
\begin{align}
 e_0-p&=(1-u)A-uB,\label{col0}\\
 e_1-q&=-vA+(1-v)B.\label{col1}
\end{align}
Let $X=\det(p,A)$ and $\gamma= \gamma_V=\det(p,B)$. Inserting $p$ as a first column in the three terms in \eqref{col0} gives
\begin{equation} \label{eq:pcol0}
\det(p,e_0-p)=\det(p,e_0)=(1-u)\det(p,A)-u\det(p,B)=(1-u)X-u\gamma.
\end{equation}
Since $\det(p,e_0)=\det\begin{bmatrix}a&1\\c&0\end{bmatrix}= -c$, \eqref{eq:pcol0} implies
\begin{equation} \label{eq:cvalue}
c= -(1-u)X+u\gamma.
\end{equation}
Inserting $p$ as a first column in the three terms in \eqref{col1} gives
\begin{equation} \label{eq:pcol1}
\det(p,e_1-q) =-v\det(p,A)+(1-v)\det(p,B) = -vX + (1-v)\gamma.
\end{equation}
Since $\det(p,e_1-q)=\det\begin{bmatrix}a&0\\c&1\end{bmatrix} - \det(p,q)$, \eqref{eq:pcol1} implies
\begin{equation} \label{eq:avalue}
a-\Delta= -vX+(1-v)\gamma.
\end{equation}

It follows from \eqref{eq:cvalue} and \eqref{eq:avalue} that
\begin{align*}
 \rho p-(u-1)\Delta
 &=-(1-u)(a-\Delta)+vc\\
 &=-(1-u)\{-vX+(1-v)\gamma\}
   +v\{-(1-u)X+u\gamma\}\\
 &=(u+v-1)\gamma.
\end{align*}
This proves \eqref{local1}.

An analogous argument proves \eqref{local0}. We begin by defining $Y=\det(q,A)$ and $\delta= \delta_V= \det(q,B)$.  Inserting $q$ as a first column in each of the three terms of \eqref{col1}, and then in each of the three terms of \eqref{col0}, gives
\begin{equation} \label{eq:bvalue}
 b=-vY+(1-v)\delta
 \end{equation}
 and
 \begin{equation} \label{eq:dvalue}
 d-\Delta=-(1-u)Y+u\delta,
\end{equation}
by computations analogous to those used in proving \eqref{eq:avalue} and \eqref{eq:cvalue}, respectively.
Now it follows from \eqref{eq:bvalue} and \eqref{eq:dvalue} that
\begin{align*}
 \rho q-v\Delta
 &=-(1-u)b+v(d-\Delta)\\
 &=-(1-u)\{-vY+(1-v)\delta\}
   +v\{-(1-u)Y+u\delta\}\\
 &=(u+v-1)\delta,
\end{align*}
which proves \eqref{local0}.
\end{proof}

\section{The positive differential decomposition}\label{sec:differential}
This part of the proof extensively uses the polynomial $H_W(u,v)$ defined in \eqref{Hdef},
often abbreviated to $H_W.$ Define two differentiation operators by

\begin{equation}\label{Du}
 E=u\frac{\partial}{\partial u}+v\frac{\partial}{\partial v},
 \qquad
 D_u=E-\frac{\partial}{\partial u}
     =(u-1)\frac{\partial}{\partial u}
       +v\frac{\partial}{\partial v}.
\end{equation}
For a position $i$ of the cyclic word $W$, define a word
\begin{equation}\label{Vi}
 V(i)=\varepsilon_{i+1}\varepsilon_{i+2}\cdots
     \varepsilon_{k-1}\varepsilon_0\cdots\varepsilon_{i-1}.
\end{equation}
Thus $V(i)$ is obtained by deleting position $i$ and beginning immediately after it.

\begin{theorem}[Positive differential decomposition]\label{thm:main}
For every cyclic word $W$,
\begin{equation}\label{mainidentity}
 \boxed{
 H_W+D_uH_W
 =\sum_{\varepsilon_i=1}\gamma_{V(i)}
  +\sum_{\varepsilon_i=0}\delta_{V(i)}.}
\end{equation}
In particular, $H_W+D_uH_W$ has nonnegative integer coefficients.
\end{theorem}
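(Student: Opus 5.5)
The plan is to multiply the claimed identity \eqref{mainidentity} by $s=u+v-1$ and check the resulting identity. This is enough: $\Z[u,v]$ is an integral domain and $s\neq 0$, so $s$ can then be cancelled, exactly as in the uniqueness part of Theorem~\ref{prop:factor}.

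First I compute the left side times $s$. $D_u$ is a first-order derivation, and $D_u s=(u-1)\cdot 1+v\cdot 1=s$. Applying $D_u$ to \eqref{Hdef} gives
\[
 D_uQ_W = (D_us)H_W + s\,D_uH_W = s\,(H_W+D_uH_W).
\]
So it suffices to show that $D_uQ_W = s\bigl(\sum_{\varepsilon_i=1}\gamma_{V(i)}+\sum_{\varepsilon_i=0}\delta_{V(i)}\bigr)$.

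Next I compute $D_u\tr M_W$ by the Leibniz rule on the product $C_{\varepsilon_0}\cdots C_{\varepsilon_{k-1}}$. Cyclic invariance of the trace moves the differentiated factor to the front, giving
\[
 D_u\tr M_W=\sum_i \tr\bigl((D_uC_{\varepsilon_i})\,N_{V(i)}\bigr),
\]
where $N_{V(i)}=C_{\varepsilon_{i+1}}\cdots C_{\varepsilon_{i-1}}$ is the final matrix of the linear word $V(i)$ from \eqref{Vi}. A direct computation gives
\[
 D_uC_1=\begin{bmatrix}u-1&v\\0&0\end{bmatrix}=e_0\rho,\qquad D_uC_0=\begin{bmatrix}0&0\\u-1&v\end{bmatrix}=e_1\rho.
\]
Hence each term is $\tr(e_0\rho N_{V(i)})=\rho N_{V(i)}e_0$ when $\varepsilon_i=1$, and $\rho N_{V(i)}e_1$ when $\varepsilon_i=0$. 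Lemma~\ref{lem:local} now rewrites these terms:
\begin{itemize}
 \item if $\varepsilon_i=1$, the term is $(u-1)\det N_{V(i)}+s\gamma_{V(i)}$, and here $\det N_{V(i)}=u^{r-1}v^z$;
 \item if $\varepsilon_i=0$, the term is $v\det N_{V(i)}+s\delta_{V(i)}$, and here $\det N_{V(i)}=u^rv^{z-1}$.
\end{itemize}

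Summing over $i$, the non-$s$ parts contribute
\[
 r(u-1)u^{r-1}v^z+z\,v\,u^rv^{z-1}=D_u(u^rv^z).
\]
This is exactly cancelled by the $-D_u(u^rv^z)$ coming from \eqref{Qdef}. What remains is $D_uQ_W=s\bigl(\sum_{\varepsilon_i=1}\gamma_{V(i)}+\sum_{\varepsilon_i=0}\delta_{V(i)}\bigr)$, and cancelling $s$ gives \eqref{mainidentity}.

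Nonnegativity of the coefficients of $H_W+D_uH_W$ then follows from \eqref{pos}.

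The only delicate step is the trace-derivative bookkeeping. One must check that cyclically rotating each Leibniz term produces precisely the linear word $V(i)$, starting right after position $i$, so that Lemma~\ref{lem:local} applies with the correct $\gamma$ and $\delta$. The edge cases $r=0$ or $z=0$ cause no trouble, since the corresponding sums are then empty.
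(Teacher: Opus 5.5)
Your proposal is correct and follows essentially the same route as the paper. You apply the derivation $D_u$ to $Q_W=1+sH_W$ using $D_us=s$, and expand $D_u\tr M_W$ by Leibniz and cyclic invariance into the terms $\rho N_{V(i)}e_0$ or $\rho N_{V(i)}e_1$. You then apply Lemma~\ref{lem:local} to each deleted word $V(i)$, match the determinant parts with $D_u(u^rv^z)$, and cancel $s$; this differs from the paper only in the order the steps are presented.
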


\begin{proof}
The operator $D_u$ is a derivation, and direct differentiation gives
\[
 D_uC_1=e_0\rho = \begin{bmatrix}u-1&v\\0&0\end{bmatrix},~ D_uC_0=e_1\rho=\begin{bmatrix}0&0\\u-1&v\end{bmatrix}.
\]
Let
\[
 P_i=C_{\varepsilon_0}\cdots C_{\varepsilon_{i-1}},
 \qquad
 S_i=C_{\varepsilon_{i+1}}\cdots C_{\varepsilon_{k-1}}.
\]
By the product rule and cyclic invariance of trace, matrix $M_W$ satisfies
\begin{equation}\label{Dtrace}
 D_u\tr M_W
 =\sum_{\varepsilon_i=1}\rho S_iP_ie_0
  +\sum_{\varepsilon_i=0}\rho S_iP_ie_1.
\end{equation}
The matrix $S_iP_i$ is exactly the matrix product associated with the linear word $V(i)$ in \eqref{Vi}.

Furthermore, we shall prove
\begin{equation}\label{Ddet}
 D_u(u^rv^z)
 =\sum_{\varepsilon_i=1}(u-1)\det(S_iP_i)  + \sum_{\varepsilon_i=0}v\det(S_iP_i).
\end{equation}
The matrix $S_iP_i$ contains all of the factors $C_{\varepsilon_j}$ except the one with $j=i$. If $\varepsilon_i=1$, then since $\det C_1 = u$ and $\det C_0 = v$ we have $\det(S_iP_i)= u^{r-1}v^z$ so 
\begin{equation} \label{case1}
 \sum_{\varepsilon_i=1}(u-1)\det(S_iP_i) =  \sum_{\varepsilon_i=1}(u-1)u^{r-1}v^z = r(u-1)u^{r-1}v^z.
\end{equation}
 If $\varepsilon_i=0$, then $\det(S_iP_i)= u^rv^{z-1}$ so
 \begin{equation} \label{case0}
 \sum_{\varepsilon_i=0}v\det(S_iP_i) =  \sum_{\varepsilon_i=0}vu^rv^{z-1} = zvu^rv^{z-1}.
\end{equation}
Combining \eqref{case1} and \eqref{case0} gives
\begin{align*}
&\sum_{\varepsilon_i=1}(u-1)\det(S_iP_i)
+
\sum_{\varepsilon_i=0}v\det(S_iP_i)
\\[4pt]
&= r(u-1)u^{r-1}v^z+zvu^rv^{z-1}=D_u(u^rv^z),
\end{align*}
which is \eqref{Ddet}.

Subtracting \eqref{Ddet} from \eqref{Dtrace} and applying Lemma~\ref{lem:local} to each $V(i)$ (see \eqref{Qdef}) yields
\begin{equation}\label{DQ}
 D_uQ_W
 =s\left(
 \sum_{\varepsilon_i=1}\gamma_{V(i)}
 +\sum_{\varepsilon_i=0}\delta_{V(i)}\right)
\end{equation}
because, when $N=S_iP_i$, if $\varepsilon_i=1$, \eqref{local1} applied to $V(i)$ gives
$\rho S_iP_i e_0 -(u-1)\det(S_iP_i) = s\gamma_{V(i)}$; and if $\varepsilon_i=0$, \eqref{local0} applied to $V(i)$ gives
$\rho S_iP_i e_1 -v\det(S_iP_i) =s \delta_{V(i)}$.

On the other hand, $Q_W=1+sH_W$ and $D_us=s$, so
\[
 D_uQ_W=s(H_W+D_uH_W).
\]
Cancelling $s$ in the integral domain $\Z[u,v]$ proves \eqref{mainidentity}. The nonnegativity of the coefficients follows from \eqref{pos}.
\end{proof}

\begin{theorem}[Normalized cyclic deletion]\label{thm:normal}
For every nonconstant cyclic word $W$, inequality \eqref{normu} holds for $i\ge1$, $j\ge0$, and inequality \eqref{normv} holds for $i\ge0$, $j\ge1$.
\end{theorem}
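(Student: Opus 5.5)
The plan is to read \eqref{normu} directly off the coefficients of the identity \eqref{mainidentity} in Theorem~\ref{thm:main}, and then to obtain \eqref{normv} from \eqref{normu} by the symmetry that exchanges the letters $0,1$ and the variables $u,v$.

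\textbf{Step 1: the $u$-inequality.} Since $D_u=E-\partial/\partial u$, I would extract the coefficient of $u^{i-1}v^j$ (with $i\ge1$, $j\ge0$) from each term of $H_W+D_uH_W$:
\begin{itemize}
\item in $H_W$ it is $h_{i-1,j}(W)$;
\item in $EH_W$ it is $(i-1+j)h_{i-1,j}(W)$, because $E$ multiplies each monomial by its total degree;
\item in $\partial H_W/\partial u$ it is $i\,h_{i,j}(W)$.
\end{itemize}
Hence the coefficient of $u^{i-1}v^j$ in $H_W+D_uH_W$ is
\[
 (i+j)h_{i-1,j}(W)-i\,h_{i,j}(W).
\]
By Theorem~\ref{thm:main}, this polynomial equals $\sum_{\varepsilon_i=1}\gamma_{V(i)}+\sum_{\varepsilon_i=0}\delta_{V(i)}$. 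That sum has nonnegative coefficients by \eqref{pos}, so the displayed coefficient is $\ge0$, which is exactly \eqref{normu}.

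\textbf{Step 2: the symmetry.} Let $J=\begin{bmatrix}0&1\\1&0\end{bmatrix}$ and write $\sigma$ for the substitution $u\leftrightarrow v$. A direct check gives
\[
 JC_0(u,v)J=\begin{bmatrix}v&u\\0&1\end{bmatrix}=C_1(v,u),
\]
and similarly $JC_1(u,v)J=C_0(v,u)$. Let $\overline W$ be the complementary cyclic word, obtained by swapping every $0$ and $1$. Then $M_{\overline W}(v,u)=JM_W(u,v)J$, so the two matrices have the same trace. Also $\overline W$ has $z$ ones and $r$ zeros, so $u^zv^r$ evaluated at $(v,u)$ equals $u^rv^z$. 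Hence $Q_{\overline W}(v,u)=Q_W(u,v)$. Since $s=u+v-1$ is symmetric, the uniqueness of $H$ in \eqref{Hdef} gives
\[
 H_{\overline W}(v,u)=H_W(u,v),\qquad\text{that is,}\qquad h_{i,j}(W)=h_{j,i}(\overline W).
\]

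\textbf{Step 3: the $v$-inequality.} Applying \eqref{normu} (from Step 1) to $\overline W$ with the indices swapped gives
\[
 j\,h_{j,i}(\overline W)\le(i+j)h_{j-1,i}(\overline W),
\]
and by Step 2 this is $j\,h_{i,j}(W)\le(i+j)h_{i,j-1}(W)$, which is \eqref{normv}. Alternatively, one can rerun the proof of Theorem~\ref{thm:main} with $D_v=E-\partial/\partial v$. This would require mirrored local identities using $\rho'=\begin{bmatrix}u&v-1\end{bmatrix}$ and the states $\alpha,\beta$, but the symmetry argument avoids that work.

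\textbf{Remaining point.} I do not expect a real obstacle. The substantive work is already in Theorem~\ref{thm:main}, and what remains is checking the coefficient extraction. The one thing to verify carefully is the conjugation identity $JC_\epsilon(u,v)J=C_{1-\epsilon}(v,u)$ in Step 2, since the whole $v$-inequality rests on it.

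I would also confirm that the hypothesis ``nonconstant'' is harmless. If $W$ is constant, say $W=1^k$, then $\tr M_W=u^k+1$ and $Q_W=1$, so $H_W=0$ and both inequalities hold trivially. The hypothesis therefore appears only to rule out a degenerate case and is not used in the argument.
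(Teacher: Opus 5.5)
Your proposal is correct and follows the paper's proof: you read off \eqref{normu} as the coefficient of $u^{i-1}v^j$ in the identity of Theorem~\ref{thm:main}, and you transfer it to \eqref{normv} by bit complementation together with conjugation by $J$. One difference is that you check explicitly that the subtracted term $u^rv^z$ transforms correctly and invoke the uniqueness in \eqref{Hdef} to get $H_{\overline W}(v,u)=H_W(u,v)$, steps the paper compresses into a single ``Hence.''
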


\begin{proof}
The coefficient of $u^{i-1}v^j$ in
\[
 H_W+D_uH_W
 =H_W+(u-1)\partial_uH_W+v\partial_vH_W
\]
is
\[
 (i+j)h_{i-1,j}(W)-i h_{i,j}(W).
\]
Theorem~\ref{thm:main} says this is nonnegative, proving \eqref{normu}.

For the reflected inequality, let $\overline W$ be the bit complement of $W$, and define
\[
 J=\begin{bmatrix}0&1\\1&0\end{bmatrix}.
\]
A direct calculation gives
\[
 JC_0(v,u)J=C_1(u,v),\qquad
 JC_1(v,u)J=C_0(u,v).
\]
Hence $H_{\overline W}(u,v)=H_W(v,u)$.  Applying \eqref{normu} to $\overline W$ and interchanging $u$ and $v$ proves \eqref{normv}.
\end{proof}

\section{From the normalized inequalities to Tu--Deng}\label{sec:transfer}
\subsection{The pair-count formulation and the cyclic weight-drop count}

Recall $M=2^k-1$, and let $1\le t<M$.  For an integer $q$, write $\langle q\rangle_M$ for the unique
representative of its residue class modulo $M$ in $\{0,1,\ldots,M-1\}$.  For
$0\le n\le M$, define
\[
 n\oplus_k t=\langle n+t\rangle_M.
\]
Thus the \emph{input} interval $\{0,1,\ldots,M\}$ contains the two $k$-bit
representatives $0=00\cdots0$ and $M=11\cdots1$ of the zero residue, whereas
$n\oplus_k t$ is always taken in the canonical interval $\{0,\ldots,M-1\}$.
Define
\[
 D_{t,k}=\{0\le n\le M:\wt(n\oplus_k t)<\wt(n)\}.
\]

The usual pair set in the Tu--Deng conjecture is
\[
 S_{t,k}=\bigl\{(a,b)\in\{0,1,\ldots,M-1\}^2:
 a+b\equiv t\pmod M,\ \wt(a)+\wt(b)<k\bigr\}.
\]
The conjecture is $|S_{t,k}|\le 2^{k-1}$.  We now prove that this is exactly the
same count as $D_{t,k}$.

\begin{theorem}[Pair-count/cyclic-count equivalence]\label{paircount}
For $k\ge2$ and $1\le t<2^k-1$,
\[
 |S_{t,k}|=|D_{t,k}|.
\]
Consequently the original pair-count formulation of Tu--Deng is equivalent to
\begin{equation}\label{TDtarget}
 |D_{t,k}|\le2^{k-1}.
\end{equation}
\end{theorem}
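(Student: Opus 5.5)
The plan is to build an explicit bijection between $S_{t,k}$ and $D_{t,k}$ from the complement map $a\mapsto M-a$. The only arithmetic fact needed is that for $0\le x\le M$ the $k$-bit expansion of $M-x$ is the bitwise complement of that of $x$. Hence
\[
 \wt(M-x)=k-\wt(x)\qquad(0\le x\le M).
\]

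First, note that a pair $(a,b)\in S_{t,k}$ is determined by $a$ alone: once $a\in\{0,\ldots,M-1\}$ is fixed, the congruence $a+b\equiv t\pmod M$ forces $b=\langle t-a\rangle_M$. So $S_{t,k}$ is in bijection with the set of $a\in\{0,\ldots,M-1\}$ satisfying
\[
 \wt(a)+\wt(\langle t-a\rangle_M)<k .
\]
Put $n=M-a$. As $a$ runs over $\{0,\ldots,M-1\}$, $n$ runs bijectively over $\{1,\ldots,M\}$. Then $n+t\equiv t-a\pmod M$, so $\langle t-a\rangle_M=n\oplus_k t$, and $\wt(a)=k-\wt(n)$ by the complement identity. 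The defining inequality of $S_{t,k}$ therefore becomes
\[
 \wt(n\oplus_k t)<\wt(n).
\]
This gives a bijection between $S_{t,k}$ and $\{1\le n\le M:\wt(n\oplus_k t)<\wt(n)\}$.

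Next, this set equals $D_{t,k}$, because $n=0$ never lies in $D_{t,k}$: we have $0\oplus_k t=t$, and $\wt(t)<\wt(0)=0$ is impossible. Hence $|S_{t,k}|=|D_{t,k}|$. The equivalence of the conjecture with \eqref{TDtarget} is then immediate.

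The only delicate point is bookkeeping at the zero residue, which is why the input interval is $\{0,\ldots,M\}$. The pair with $a=0$ corresponds to the input $n=M=11\cdots1$, which has weight $k$, and not to $n=0$. Since $M\oplus_k t=\langle M+t\rangle_M=t$, the condition for $n=M$ reads $\wt(t)<k$, exactly matching $\wt(0)+\wt(t)<k$ for the pair $(0,t)$. I will check this endpoint explicitly, along with the fact that $\langle\cdot\rangle_M$ always lands in $\{0,\ldots,M-1\}$ so that $b$ is in the correct range. Everything else is routine.
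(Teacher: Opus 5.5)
Your proposal is correct and matches the paper's proof essentially step for step: the complement bijection $n=M-a$ onto $\{1,\ldots,M\}$, the identity $\langle t-a\rangle_M=n\oplus_k t$, the translation of the weight condition via $\wt(M-a)=k-\wt(a)$, and the exclusion of $n=0$ from $D_{t,k}$. Your extra check of the endpoint $a=0\leftrightarrow n=M$ is consistent with this argument but is not needed.
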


\begin{proof}
Write the $k$-bit expansion of $a$ as
\[
 a=\sum_{i=0}^{k-1}a_i2^i,\qquad a_i\in\{0,1\}.
\]
Since $M=\sum_{i=0}^{k-1}2^i$, subtraction gives
\[
 M-a=\sum_{i=0}^{k-1}(1-a_i)2^i,
\]
and hence
\begin{equation} \label{eq:compl}
 \wt(M-a)=k-\wt(a)\qquad(0\le a\le M).
\end{equation}

For every $a\in\{0,\ldots,M-1\}$ there is exactly one
$b\in\{0,\ldots,M-1\}$ satisfying $a+b\equiv t\pmod M$, namely
$b=\langle t-a\rangle_M$.  Define
\[
 \Phi(a,b)=M-a.
\]
As $a$ runs through $\{0,\ldots,M-1\}$, the integer
$n=M-a$ runs bijectively through $\{1,\ldots,M\}$.  Moreover,
\[
 b=\langle t-a\rangle_M
  =\langle t+(M-a)\rangle_M
  =n\oplus_k t.
\]
Using the complement identity above, the defining inequality for $S_{t,k}$
becomes
\begin{align*}
 \wt(a)+\wt(b)<k
 &\iff k-\wt(n)+\wt(n\oplus_k t)<k\\
 &\iff \wt(n\oplus_k t)<\wt(n).
\end{align*}
Thus $\Phi$ restricts to a bijection from $S_{t,k}$ onto
$D_{t,k}\cap\{1,\ldots,M\}$.  Finally, $0\notin D_{t,k}$ because
$0\oplus_k t=t$ and $\wt(t)>0=\wt(0)$.  Therefore
$D_{t,k}\cap\{1,\ldots,M\}=D_{t,k}$, proving the equality of counts and hence
the equivalence of the two bounds.
\end{proof}

\subsection{The cyclic transfer identity}

Define $W$ in terms of the bits of $t$ as follows:
\[
 t=\sum_{i=0}^{k-1}\varepsilon_i2^i,
 \qquad W=\varepsilon_0\varepsilon_1\cdots\varepsilon_{k-1},
 \qquad r=\sum_{i=0}^{k-1}\varepsilon_i=\wt(t).
\]
Define
\[
 B_0(x)=C_0(x/2,x^{-1}/2)
 =\begin{bmatrix}1&0\\ x/2&x^{-1}/2\end{bmatrix},
 \qquad
 B_1(x)=C_1(x/2,x^{-1}/2)
 =\begin{bmatrix}x/2&x^{-1}/2\\0&1\end{bmatrix}
\]
and define
\[
  B(t)=\prod_{i=0}^{k-1}B_{\varepsilon_i}(x).
\]
It will also be convenient to have the non-modular addition operation
$\oplus_k'$, which is related to $\oplus_k$: for $0\le n,t\le M$, define
\[
  n\oplus_k' t:=\begin{cases} n+t,& n+t\le M,\\ n+t-M,& n+t>M. \end{cases}
\]

\begin{theorem}[Cyclic transfer identity]\label{cyclictransfer}
For every $k\ge1$ and every $t$ with $0\le t<M$ 
\begin{equation}\label{eq:A}
 \tr\prod_{i=0}^{k-1}B_{\varepsilon_i}(x)
 =2^{-k}\left(
 x^r+\sum_{n=0}^{2^k-1}
 x^{\wt(n\oplus_k t)-\wt(n)}\right).
\end{equation}
Consequently the total coefficient of the negative powers on the left-hand
side is $|D_{t,k}|/2^k$.
\end{theorem}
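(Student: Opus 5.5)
The plan is to interpret the matrix product as a transfer matrix for binary addition with carry, read from the least significant bit upward. The two matrix indices are the carry states $0$ and $1$. Each factor $B_{\varepsilon_i}(x)$ averages over the current bit $n_i\in\{0,1\}$ of $n$, each choice weighted by $1/2$. It records $x^{\text{(output bit)}-n_i}$ and moves to the appropriate carry state.

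Concretely, with $\varepsilon_i=0$ and incoming carry $c=0$, the two choices $n_i=0,1$ both give output bit $n_i$ and outgoing carry $0$. The weight is $x^0/2+x^0/2=1$, which matches the $(0,0)$ entry of $B_0$ and its zero $(0,1)$ entry. With incoming carry $1$:
\begin{itemize}
\item $n_i=0$ gives output $1$, carry $0$, weight $x/2$;
\item $n_i=1$ gives output $0$, carry $1$, weight $x^{-1}/2$.
\end{itemize}
This matches the second row of $B_0$.

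For $\varepsilon_i=1$ and carry $0$:
\begin{itemize}
\item $n_i=0$ gives output $1$, carry $0$, weight $x/2$;
\item $n_i=1$ gives output $0$, carry $1$, weight $x^{-1}/2$.
\end{itemize}
With carry $1$, both choices give output $n_i$ and carry $1$, weight $1$. This matches $B_1$ after checking the row/column convention: entry $(c,c')$ is the transition from carry $c$ to carry $c'$. The first step is to verify these four cases.

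Expanding the product, $\prod_i B_{\varepsilon_i}(x)$ has $(c,c')$ entry equal to $2^{-k}\sum_n x^{\wt(\text{out})-\wt(n)}$. The sum runs over $n\in\{0,\dots,2^k-1\}$ whose addition $n+t+c$, truncated to $k$ bits, has final carry $c'$. The trace then restricts to $c=c'$, which is exactly the \emph{cyclic} (end-around carry) addition. For $c=0$ we need $n+t<2^k$, and the output is $n+t$. For $c=1$ we need $n+t+1\ge 2^k$, and the output is $n+t+1-2^k=n+t-M$.

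The next step is to compare this trace sum with the right side of \eqref{eq:A}. Each $n\in\{0,\dots,M\}$ falls into exactly one of three cases, since $n+t<2^k$ and $n+t+1\ge 2^k$ overlap only when $n+t=M$:
\begin{itemize}
\item If $n+t<M$, only $c=0$ occurs, and the output $n+t$ equals $n\oplus_k t$.
\item If $n+t>M$, only $c=1$ occurs, and the output $n+t-M$ equals $n\oplus_k t$.
\item If $n+t=M$, both cycles occur. The $c=0$ cycle gives output $M$, so exponent $k-\wt(n)=k-(k-r)=r$, using \eqref{eq:compl}. The $c=1$ cycle gives output $0=n\oplus_k t$.
\end{itemize}
So the trace equals $2^{-k}\bigl(x^r+\sum_{n=0}^{M}x^{\wt(n\oplus_k t)-\wt(n)}\bigr)$. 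The range $0\le n\le M=2^k-1$ is exactly the sum in \eqref{eq:A}, with the extra $x^r$ coming from the doubled term $n=M-t$.

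The main obstacle is careful bookkeeping of this boundary case $n+t=M$, and of the fact that $n=M$ (all ones) is included among $k$-bit strings. For $n=M$, the case $c=1$ gives output $t$, which equals $M\oplus_k t$. If $t=0$ this is the doubled case, and it should be checked separately.

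For the consequence, $x^r$ has $r\ge0$, so it contributes no negative power. Each $n$ contributes $x^{\text{negative}}$ exactly when $\wt(n\oplus_k t)<\wt(n)$, i.e. when $n\in D_{t,k}$. Hence the total coefficient of negative powers is $|D_{t,k}|/2^k$.
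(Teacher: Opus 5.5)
Your proposal is correct and follows essentially the paper's route: your entry check is Lemma~\ref{lem:entries}, your expansion of the trace over closed carry paths is Lemma~\ref{lem:trace}, and reading the two closures off the linear addition $n+t+c=s+2^kc'$ replaces the fixed-point map of Lemma~\ref{lem:count}, which lets you attach the extra $x^r$ directly to the $c=0$ cycle at $n=M-t$ without the paper's detour through $\oplus_k'$. The one step you leave implicit is that $t<M$ forces $n+t<2M$, so the $c=1$ output $n+t-M$ really equals $\langle n+t\rangle_M$ when $n+t>M$; this is exactly where the hypothesis $t<M$ enters.
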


\section{Proof of the cyclic transfer identity}\label{sec:proofA}
\subsection{The matrix entries as sums over carries}

Put $A_\varepsilon:=2B_\varepsilon$, that is
\[
  A_0=\begin{pmatrix}2&0\\ x&x^{-1}\end{pmatrix},\qquad
  A_1=\begin{pmatrix}x&x^{-1}\\ 0&2\end{pmatrix}.
\]
Then $2^{k}\tr B(t)=\tr\prod_{i=0}^{k-1}A_{\varepsilon_i}$, so \eqref{eq:A} 
is a statement about $\tr\prod_i A_{\varepsilon_i}$. We index rows and columns of
$A_\varepsilon$ by $\{0,1\}$, thought of as \emph{carry} values.

\begin{lemma}\label{lem:entries}
For all $\varepsilon,c,c'\in\{0,1\}$,
\[
  (A_\varepsilon)_{c,c'}\;=\;\sum_{\substack{(n,s)\in\{0,1\}^2\\ n+\varepsilon+c=s+2c'}}x^{\,s-n}.
\]
\end{lemma}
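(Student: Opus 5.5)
This is a finite check, so the plan is to verify all eight cases $(\varepsilon,c,c')\in\{0,1\}^3$ directly. For each triple we enumerate the pairs $(n,s)\in\{0,1\}^2$ satisfying $n+\varepsilon+c=s+2c'$. The quantity $n+\varepsilon+c$ lies in $\{0,1,2,3\}$, and the pair $(s,c')$ is exactly its binary representation, that is, the output digit and outgoing carry of a one-bit full adder. Hence for fixed $(\varepsilon,c,n)$ there is exactly one admissible $(s,c')$. For fixed $(\varepsilon,c,c')$ the sum therefore runs over those $n\in\{0,1\}$ whose adder output has carry $c'$, and each such $n$ contributes the single term $x^{s-n}$.

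We tabulate these cases and compare them with the matrices $A_0,A_1$.

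\textbf{Case $\varepsilon=0$, $c=0$.} The total is $n$, so $c'=0$ and $s=n$. Both $n=0$ and $n=1$ contribute $x^0$, giving entry $(0,0)$ equal to $2$ and entry $(0,1)$ equal to $0$. This matches the first row of $A_0$.

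\textbf{Case $\varepsilon=0$, $c=1$.} The total is $n+1$. For $n=0$ we get $s=1$ and $c'=0$, contributing $x$. For $n=1$ we get $s=0$ and $c'=1$, contributing $x^{-1}$. This gives the second row $(x,\,x^{-1})$ of $A_0$.

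\textbf{Case $\varepsilon=1$, $c=0$.} The computation is identical to the previous case, giving $(x,\,x^{-1})$. This is the first row of $A_1$.

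\textbf{Case $\varepsilon=1$, $c=1$.} The total is $n+2$, so $c'=1$ and $s=n$. Both $n=0$ and $n=1$ contribute $x^0$, giving the row $(0,\,2)$. This matches the second row of $A_1$.

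No step is a genuine obstacle. The only care needed is the indexing convention: rows are indexed by the incoming carry $c$ and columns by the outgoing carry $c'$, and the exponent is $s-n$ rather than $n-s$. Both conventions are fixed by matching one entry, for example $(A_0)_{1,0}=x$, and the remaining seven cases then confirm them. This convention matters later, where the product $\prod_i A_{\varepsilon_i}$ is read as a sum over carry sequences with row index equal to the carry into bit $i$. The trace then forces the final carry to equal the initial one, which models the end-around carry of addition modulo $M$.
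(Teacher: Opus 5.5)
Your proof is correct and follows essentially the paper's route: a direct check of all eight cases, using the fact that $(s,c')$ is the binary representation of $n+\varepsilon+c$, so that each entry collects the $n\in\{0,1\}$ producing the given carry $c'$. The only difference is that you tabulate row by row over $(\varepsilon,c)$ rather than entry by entry, and your four rows agree with $A_0$ and $A_1$.
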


\begin{proof}
Both sides are checked in the eight possible cases. Since $n,s,\varepsilon,c,c'\in\{0,1\}$,
the equation $n+\varepsilon+c=s+2c'$ determines $s$ and $c'$ from $(n,\varepsilon,c)$, namely
$s=(n+\varepsilon+c)\bmod 2$ and $c'=\lfloor (n+\varepsilon+c)/2\rfloor$; so for fixed
$(\varepsilon,c,c')$ we simply collect those $n\in\{0,1\}$ that produce this $c'$.

\smallskip
\noindent$\varepsilon=0$:
\begin{center}
\begin{tabular}{lll}
\toprule
$(c,c')$ & admissible $(n,s)$ & sum\\
\midrule
$(0,0)$ & $(0,0),(1,1)$ & $x^{0}+x^{0}=2$\\
$(0,1)$ & none & $0$\\
$(1,0)$ & $(0,1)$ & $x^{1-0}=x$\\
$(1,1)$ & $(1,0)$ & $x^{0-1}=x^{-1}$\\
\bottomrule
\end{tabular}
\end{center}
This is $A_0$.

\smallskip
\noindent$\varepsilon=1$:
\begin{center}
\begin{tabular}{lll}
\toprule
$(c,c')$ & admissible $(n,s)$ & sum\\
\midrule
$(0,0)$ & $(0,1)$ & $x^{1-0}=x$\\
$(0,1)$ & $(1,0)$ & $x^{0-1}=x^{-1}$\\
$(1,0)$ & none & $0$\\
$(1,1)$ & $(0,0),(1,1)$ & $x^{0}+x^{0}=2$\\
\bottomrule
\end{tabular}
\end{center}
This is $A_1$. 
\end{proof}

\subsection{The trace as a sum over carry-consistent triples}

Indices $i$ are read modulo $k$ from now on.

\begin{definition}
A triple $(n,c,s)$ with $n,c,s\in\{0,1\}^{k}$ is \emph{carry-consistent} (for $t$) if
\begin{equation}\label{eq:cons}
  n_i+\varepsilon_i+c_i=s_i+2c_{i+1}\qquad\text{for all i mod k}.
\end{equation}
\end{definition}

\begin{lemma}\label{lem:trace}
\[
  \tr\prod_{i=0}^{k-1}A_{\varepsilon_i}
  \;=\;\sum_{(n,c,s)\ \mathrm{carry\text{-}consistent}}x^{\,\wt(s)-\wt(n)} .
\]
\end{lemma}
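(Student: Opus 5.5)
We expand the trace of the product entrywise and substitute the entry formula of Lemma~\ref{lem:entries} into each factor. By definition of the matrix product and the trace,
\[
 \tr\prod_{i=0}^{k-1}A_{\varepsilon_i}
 =\sum_{c_0,c_1,\ldots,c_{k-1}\in\{0,1\}}
 \prod_{i=0}^{k-1}(A_{\varepsilon_i})_{c_i,c_{i+1}},
\]
where indices are read modulo $k$, so that $c_k=c_0$. The trace is exactly what closes the chain of row and column indices into a cycle.

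Next, for each $i$ we replace $(A_{\varepsilon_i})_{c_i,c_{i+1}}$ by the sum over pairs $(n_i,s_i)\in\{0,1\}^2$ with $n_i+\varepsilon_i+c_i=s_i+2c_{i+1}$, each pair weighted by $x^{s_i-n_i}$. Expanding the product of these $k$ finite sums distributively gives a sum over all choices of $(n_i,s_i)_{i}$ that satisfy the local constraint at every position. Together with the outer sum over $c$, this becomes a sum over triples $(n,c,s)\in(\{0,1\}^k)^3$ satisfying \eqref{eq:cons} for every $i$ modulo $k$. These are precisely the carry-consistent triples.

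The weight attached to such a triple is $\prod_i x^{s_i-n_i}=x^{\sum_i s_i-\sum_i n_i}=x^{\wt(s)-\wt(n)}$, since $n$ and $s$ are $0/1$ vectors, which gives the claimed formula. The only point needing care is the bookkeeping: the map from (carry sequence, choice of summand in each factor) to triples must be a bijection. It is, because each triple determines its own $c$ and its own summand in each factor, and the constraints match position by position. There is no real obstacle; the cyclic closure $c_k=c_0$ coming from the trace is what makes the consistency condition hold modulo $k$ rather than only along a linear word.
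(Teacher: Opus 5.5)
Your proposal is correct and follows the paper's proof essentially verbatim: expand the trace as a sum over cyclic carry sequences with $c_k=c_0$, substitute Lemma~\ref{lem:entries} into each factor, and expand distributively to obtain carry-consistent triples with weight $x^{\wt(s)-\wt(n)}$. Your explicit remark that (carry sequence, choice of summand in each factor) corresponds bijectively to triples is a slightly more careful statement of the bookkeeping the paper leaves implicit.
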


\begin{proof}
For any $2\times2$ matrices, $\tr\prod_{i=0}^{k-1}A_{\varepsilon_i}
=\sum_{c_0,\dots ,c_{k-1}\in\{0,1\}}\prod_{i=0}^{k-1}(A_{\varepsilon_i})_{c_i,c_{i+1}}$ with
$c_k=c_0$. Apply Lemma~\ref{lem:entries} to each factor and expand the product: a term of the
expansion is a choice, for each $i$, of a pair $(n_i,s_i)$ with
$n_i+\varepsilon_i+c_i=s_i+2c_{i+1}$, and its value is
$\prod_i x^{s_i-n_i}=x^{\sum_i (s_i-n_i)}=x^{\wt(s)-\wt(n)}$, because $n,s\in\{0,1\}^k$.
Summing over $c$ and over the choices gives exactly the sum over carry-consistent triples.
\end{proof}

\subsection{What carry consistency means}

\begin{lemma}\label{lem:congr}
If $(n,c,s)$ is carry-consistent then, as integers,
\[
  s=n+t-c_0M .
\]
In particular $s\equiv n+t \pmod M$ and $c_0=0$ or $1$ according to whether $n+t\le M$ or not,
except when $n+t=M$, where both values may occur.
\end{lemma}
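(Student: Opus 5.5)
Multiply each relation \eqref{eq:cons} by $2^i$ and sum over $i=0,\dots,k-1$. The carries telescope cyclically, which gives the integer identity directly.

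\emph{Summing the relations.} The left side is
\[
\sum_i n_i2^i+\sum_i\varepsilon_i2^i+\sum_i c_i2^i=n+t+\sum_{i=0}^{k-1}c_i2^i .
\]
The right side is $s+\sum_{i=0}^{k-1}c_{i+1}2^{i+1}$. Reindexing with $j=i+1$ and using $c_k=c_0$ turns this into
\[
s+\sum_{j=1}^{k-1}c_j2^j+c_0 2^k .
\]
The two carry sums share the terms with $1\le j\le k-1$, so subtracting leaves $c_0$ on the left and $c_02^k$ on the right. Hence $n+t+c_0=s+c_02^k$, that is,
\[
s=n+t-c_0(2^k-1)=n+t-c_0M .
\]
Here $n,s$ are read as integers in $[0,M]$ from their bit vectors.

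\emph{Consequences.} The congruence $s\equiv n+t\pmod M$ is immediate from the identity. For the value of $c_0$, use the constraint $0\le s\le M$. If $c_0=0$, then $s=n+t$, so $n+t\le M$. If $c_0=1$, then $s=n+t-M\ge0$, so $n+t\ge M$. Therefore $n+t<M$ forces $c_0=0$ and $n+t>M$ forces $c_0=1$. Only when $n+t=M$ are both values compatible with the range, giving $s=M$ or $s=0$ respectively.

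The statement says "both values may occur", which I read as "both are not excluded". If existence is meant, I would check it via the companion fact that, for given $n$ and $c_0$, the relations \eqref{eq:cons} determine $c_1,\dots,c_{k-1}$ and $s$ by ordinary binary addition, with the cyclic closure condition being the only constraint. Then $c_0=0$ gives $s=M$ and $c_0=1$ gives $s=0$, and both close up consistently.

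There is no real obstacle here. The only care needed is in the telescoping step, namely the cyclic index bookkeeping $c_k=c_0$ that produces the $2^k-1$, and in stating the boundary case $n+t=M$ correctly.
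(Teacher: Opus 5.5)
Your proof is correct and follows essentially the same route as the paper: the weighted sum of the carry relations telescopes cyclically to $s=n+t-c_0M$, and the range $0\le s\le M$ then determines $c_0$ except when $n+t=M$. Your optional existence check for that boundary case is what the paper proves separately in Lemma~\ref{lem:count}(a). There $n=M-t$ is the bitwise complement of $t$, so every $n_i+\varepsilon_i=1$ and both values of $c_0$ close up.
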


\begin{proof}
Multiply \eqref{eq:cons} by $2^{i}$ and sum over $0\le i\le k-1$. With $C:=\sum_i c_i2^{i}$,
\[
  \sum_{i=0}^{k-1}2\,c_{i+1}2^{i}=\sum_{i=0}^{k-1}c_{i+1}2^{i+1}
  =\sum_{j=1}^{k-1}c_j2^{j}+c_02^{k}=(C-c_0)+c_02^{k}=C+c_0M .
\]
Hence $n+t+C=s+C+c_0M$, i.e. $s=n+t-c_0M$. Since $0\le s\le M$ and $0\le n+t\le 2M$, the value
of $c_0$ is determined unless both $n+t$ and $n+t-M$ lie in $[0,M]$, which happens only for
$n+t=M$.
\end{proof}

\begin{lemma}\label{lem:count}
Fix $n\in\{0,\dots ,M\}$. The carry-consistent triples with first component $n$ are in bijection
with the solutions $c_0\in\{0,1\}$ of a fixed-point equation, and:
\begin{enumerate}
\item[(a)] if $n_i+\varepsilon_i=1$ for all $i$, i.e. $n=M-t$, there are exactly two of them,
      namely $c\equiv 0$, which gives $s=M$, and $c\equiv 1$, which gives $s=0$;
\item[(b)] otherwise there is exactly one, and its $s$ equals $n\oplus_k' t$.
\end{enumerate}
\end{lemma}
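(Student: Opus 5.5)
Fix $n$, and hence its bit vector $(n_0,\dots,n_{k-1})$. In \eqref{eq:cons} the pair $(s_i,c_{i+1})$ is determined by $(n_i,\varepsilon_i,c_i)$: $s_i=(n_i+\varepsilon_i+c_i)\bmod 2$ and $c_{i+1}=\lfloor (n_i+\varepsilon_i+c_i)/2\rfloor$. So once $c_0$ is chosen, the linear recursion $i=0,1,\dots,k-1$ determines $c_1,\dots,c_{k-1}$, a value $c_k$, and all of $s$. The triple is carry-consistent exactly when the cyclic closing condition $c_k=c_0$ holds. Write $F(c_0)$ for the value $c_k$ produced by the recursion. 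This gives the claimed bijection between carry-consistent triples with first component $n$ and fixed points $c_0=F(c_0)$ in $\{0,1\}$.

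To count the fixed points, I look at the carry map of a single digit. When $n_i+\varepsilon_i=0$ the outgoing carry is $0$ regardless of $c_i$. When $n_i+\varepsilon_i=2$ it is $1$ regardless of $c_i$. When $n_i+\varepsilon_i=1$ it equals $c_i$. So $F$ is a composition of maps $\{0,1\}\to\{0,1\}$, each either constant or the identity.

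\textbf{Case (a).} If every digit has $n_i+\varepsilon_i=1$, then $F$ is the identity and both $c_0=0$ and $c_0=1$ are fixed points. They give $c\equiv0$ with all $s_i=1$, so $s=M$, and $c\equiv1$ with all $s_i=(1+1)\bmod2=0$, so $s=0$. The condition "$n_i+\varepsilon_i=1$ for all $i$" says the bits of $n$ are the complements of those of $t$, which is $n=M-t$ by the complement computation behind \eqref{eq:compl}.

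\textbf{Case (b).} Otherwise at least one step is constant, so $F$ is constant, say with value $\kappa$. Then $c_0=\kappa$ is the unique fixed point. By Lemma~\ref{lem:congr}, $s=n+t-c_0M$ and $0\le s\le M$.

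\textbf{Identifying $s$ in case (b).} This is the only real obstacle. Lemma~\ref{lem:congr} leaves an ambiguity exactly when $n+t=M$, and that situation is precisely $n=M-t$, since $t<M$ and $n\le M$. That is case (a), which is excluded here. So in case (b) we have $n+t\ne M$, and I split as follows.
\begin{itemize}
\item If $n+t<M$, then $c_0=1$ would force $s<0$. Hence $c_0=0$ and $s=n+t=n\oplus_k' t$.
\item If $n+t>M$, then $c_0=0$ would force $s>M$. Hence $c_0=1$ and $s=n+t-M=n\oplus_k' t$.
\end{itemize}
In both subcases $s=n\oplus_k' t$, which proves (b).
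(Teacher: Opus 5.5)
Your proposal is correct and follows the paper's proof essentially step for step: the digitwise carry maps (constant $0$, identity, or constant $1$), the fixed-point closure $c_0=F(c_0)$, and the use of Lemma~\ref{lem:congr} together with $n+t\ne M$ to identify $s=n\oplus_k' t$ in case (b). Your explicit remark that $c_0=1$ would force $s<0$ when $n+t<M$ spells out the one step the paper leaves implicit.
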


\begin{proof}
Given $n$ and $c_0$, equation \eqref{eq:cons} determines $s_i$ and $c_{i+1}$ recursively:
\[
  c_{i+1}=f_i(c_i),\qquad f_i(c):=\Big\lfloor\tfrac{n_i+\varepsilon_i+c}{2}\Big\rfloor,
  \qquad s_i=(n_i+\varepsilon_i+c_i)\bmod 2 .
\]
The only remaining constraint is the cyclic closure $c_k=c_0$, i.e. $c_0=F(c_0)$ with
$F:=f_{k-1}\circ\dots\circ f_0$. Now
\[
  f_i=\begin{cases}
  \text{constant }0, & n_i+\varepsilon_i=0,\\
  \text{identity}, & n_i+\varepsilon_i=1,\\
  \text{constant }1, & n_i+\varepsilon_i=2 .
  \end{cases}
\]

(a) If $n_i+\varepsilon_i=1$ for all $i$ — equivalently $n_i=1-\varepsilon_i$ for all $i$,
i.e. $n=M-t$ — then every $f_i$ is the identity, so $F=\mathrm{id}$ and both $c_0=0$ and
$c_0=1$ solve $c_0=F(c_0)$. For $c_0=0$ the recursion gives $c\equiv 0$ and
$s_i=n_i+\varepsilon_i=1$ for all $i$, i.e. $s=M$; for $c_0=1$ it gives $c\equiv1$ and
$s_i=n_i+\varepsilon_i+1-2=0$ for all $i$, i.e. $s=0$. (Both are consistent with
Lemma~\ref{lem:congr}, since here $n+t=M$.)

(b) Otherwise some $f_j$ is constant, hence $F$ is a constant map, say $F\equiv a$; then
$c_0=F(c_0)$ has the unique solution $c_0=a$, so there is exactly one carry-consistent triple.
Its $s$ satisfies $s=n+t-c_0M$ by Lemma~\ref{lem:congr}. Since we are not in case (a), we have
$n+t\neq M$, so: if $n+t<M$ then necessarily $c_0=0$ and $s=n+t=n\oplus_k' t$; if $n+t>M$ then
$c_0=0$ would give $s=n+t>M$, impossible, so $c_0=1$ and $s=n+t-M=n\oplus_k' t$. 
\end{proof}

\subsection{Conclusion of the proof of Theorem~\ref{cyclictransfer} }
Assume $0\le t<M$.  By Lemma~\ref{lem:trace},
\[
  2^{k}\tr B(t)=\tr\prod_{i=0}^{k-1}A_{\varepsilon_i}
  =\sum_{(n,c,s)\ \mathrm{carry\text{-}consistent}}x^{\,\wt(s)-\wt(n)} ,
\]
and the first components $n$ of the carry-consistent triples run over all $k$-bit
strings, that is, over $\{0,1,\ldots,M\}$.  By Lemma~\ref{lem:count}, every such
$n$ contributes exactly one triple, whose $s$ equals $n\oplus_k' t$, except
$n=M-t$, which contributes two triples: the one with $s=M=(M-t)\oplus_k' t$,
which is again the term counted by $\oplus_k'$, and one more with $s=0$.  The
extra term contributes, by \eqref{eq:compl},
\[
  x^{\,\wt(0)-\wt(M-t)}=x^{-(k-\wt(t))}=x^{\,\wt(t)-k}.
\]
Hence
\begin{equation}\label{eq:Bcorrected}
  2^{k}\tr B(t)\;=\;x^{\,\wt(t)-k}\;+\;\sum_{n=0}^{2^{k}-1}x^{\,\wt(n\oplus_k' t)-\wt(n)} .
\end{equation}

It remains to replace $\oplus_k'$ by $\oplus_k$. Since $0\le n+t\le2M$, the two operations differ only when $n+t=M$ or $2M$; in either case $n\oplus_k' t=M$, whereas $n\oplus_k t=0$.  The case $n+t=2M$ forces $n=t=M$, which is excluded here, and $n+t=M$ holds only for $n=M-t$.  Therefore, using \eqref{eq:compl} again,
\begin{align*}
  \sum_{n=0}^{M}x^{\,\wt(n\oplus_k' t)-\wt(n)}
  &=\sum_{n=0}^{M}x^{\,\wt(n\oplus_k t)-\wt(n)}
   +\Bigl(x^{\,k-\wt(M-t)}-x^{\,0-\wt(M-t)}\Bigr)\\
  &=\sum_{n=0}^{M}x^{\,\wt(n\oplus_k t)-\wt(n)}+x^{\wt(t)}-x^{\wt(t)-k}.
\end{align*}
Substituting this into \eqref{eq:Bcorrected} makes the terms
$\pm x^{\wt(t)-k}$ cancel and gives
\[
  2^{k}\tr B(t)=x^{\wt(t)}+\sum_{n=0}^{2^{k}-1}x^{\,\wt(n\oplus_k t)-\wt(n)},
\]
which is \eqref{eq:A}, since $r=\wt(t)$.

For the last assertion, $\wt(n\oplus_kt)-\wt(n)<0$ precisely for the $|D_{t,k}|$
values $n\in D_{t,k}$, and $r\ge0$, so the total coefficient of the negative
powers of $x$ on the left-hand side of \eqref{eq:A} is $|D_{t,k}|/2^k$.

\begin{remark}\label{rem:tM}
For $t=M$ the two additions differ at $n=M-t=0$ \emph{and} at $n=M$ (where $n+t=2M$), so the
same computation gives
\[
  2^{k}\tr B(M)=\sum_{n=0}^{2^k-1}x^{\,\wt(n\oplus_k M)-\wt(n)}+\bigl(x^{k}-x^{-k}+1\bigr),
\]
and $x^{k}-x^{-k}+1$ is not a monomial for any $k\ge1$; so \eqref{eq:A} admits no exponent $r$
when $t=M$. For $k=1$, $t=M=1$ one checks directly $2\tr B(1)=\tr A_1=x+2$, whereas
$x^{r}+\sum_{n=0}^{1}x^{\wt(n\oplus_k 1)-\wt(n)}=x^{r}+1+x^{-1}$.
\end{remark}

\section{Deduction of the conjecture}\label{sec:deduction}

Equation \eqref{Qdef} now gives
\begin{equation}\label{traceQ}
 \tr\prod_iB_{\varepsilon_i}(x)
 =Q_W(x/2,x^{-1}/2)+2^{-k}x^{2r-k}.
\end{equation}

\begin{lemma}[Positivity and normalization of $Q_W$]\label{lem:Qpositive}
For every nonconstant cyclic word $W$,
\[
 Q_W(u,v)\in\N[u,v],\qquad Q_W(1/2,1/2)=1,
 \qquad h_{0,0}(W)=1.
\]
Consequently, after the substitution $u=x/2$, $v=x^{-1}/2$, the Laurent
coefficients of $Q_W$ are nonnegative and have total mass $1$.
\end{lemma}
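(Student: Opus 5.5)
We prove the three assertions separately, each from results already established. First, $Q_W(1/2,1/2)=1$ is immediate from the discussion after \eqref{Qdef}: $Q_W\equiv1$ on the line $u+v=1$, and $(1/2,1/2)$ lies on it. Equivalently, \eqref{Hdef} gives $Q_W=1+sH_W$, and $s$ vanishes at $(1/2,1/2)$.

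For $h_{0,0}(W)=1$ we compare constant terms in $Q_W=1+(u+v-1)H_W$. The constant term of the right-hand side is $1-h_{0,0}(W)$. On the left, $Q_W(0,0)=\tr M_W(0,0)-0^r0^z$. Because $W$ is nonconstant, $r,z\ge1$, so the monomial $u^rv^z$ contributes nothing at the origin. At $u=v=0$ we have $C_0(0,0)=e_1e_1^{T}$ and $C_1(0,0)=e_0e_0^{T}$. Both are rank-one projections, and $e_0^{T}e_1=0$. Since $W$ contains both letters, the product $M_W(0,0)$ contains an adjacent factor pair $C_0C_1$ or $C_1C_0$, each of which is zero, so $M_W(0,0)=0$. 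Hence $Q_W(0,0)=0$, which gives $h_{0,0}(W)=1$. This step is the only place where nonconstancy is genuinely needed.

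For $Q_W\in\N[u,v]$, the obstacle is that $1+sH_W$ has the $-H_W$ term, so positivity of $H_W$ alone does not suffice. Write $Q_W=1-H_W+(u+v)H_W$. The coefficient of $u^iv^j$ is then
\[
[i=j=0]-h_{i,j}+h_{i-1,j}+h_{i,j-1},
\]
with out-of-range $h$ taken as $0$. At $(0,0)$ this is $1-h_{0,0}=0$. For $(i,j)\ne(0,0)$ we use Theorem~\ref{thm:normal}, which gives $ih_{i,j}\le(i+j)h_{i-1,j}$ and $jh_{i,j}\le(i+j)h_{i,j-1}$. Adding the two inequalities gives $(i+j)h_{i,j}\le(i+j)(h_{i-1,j}+h_{i,j-1})$. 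Dividing by $i+j>0$ yields $h_{i,j}\le h_{i-1,j}+h_{i,j-1}$. When $i=0$ or $j=0$, only one of the inequalities is available, but it already gives the bound directly; for example, with $j=0$ we get $ih_{i,0}\le ih_{i-1,0}$. So every coefficient is nonnegative.

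An alternative route is to read $\tr N-\det N$ off the six-state recursion, but the differential inequalities are the cleanest path. Finally, substituting $u=x/2$, $v=x^{-1}/2$ sends each monomial $u^iv^j$ to $2^{-i-j}x^{i-j}$ with a nonnegative coefficient. So the Laurent coefficients are nonnegative, and their sum equals the value of the polynomial at $x=1$, namely $Q_W(1/2,1/2)=1$.
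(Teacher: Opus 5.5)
Your proof is correct. For $Q_W(1/2,1/2)=1$ and $h_{0,0}(W)=1$ it matches the paper's argument. There is one harmless slip: $C_0(0,0)=e_0e_0^{T}$ and $C_1(0,0)=e_1e_1^{T}$, not the other way round. The orthogonality conclusion, and hence $M_W(0,0)=0$, is unaffected.

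For $Q_W\in\N[u,v]$, however, you take a genuinely different route.

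\textbf{The paper's route.} It argues directly from the trace. Expanding $\tr M_W$ over cyclic state strings gives an element of $\N[u,v]$, because every entry of $C_0$ and $C_1$ lies in $\{0,1,u,v\}$. The particular admissible string $c_i=1-\varepsilon_i$ contributes exactly the monomial $u^rv^z$. So subtracting one copy of $u^rv^z$ cannot create a negative coefficient.

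\textbf{Your route.} You write $Q_W=1-H_W+(u+v)H_W$ and note that, away from the origin, positivity is equivalent to the Pascal-type inequality $h_{i,j}\le h_{i-1,j}+h_{i,j-1}$. You then obtain this by adding \eqref{normu} and \eqref{normv} and dividing by $i+j$, treating the edges $i=0$ and $j=0$ correctly.

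\textbf{Comparison.} Your argument is short and makes a real structural point: the normalized inequalities of Theorem~\ref{thm:normal} already contain the positivity of $Q_W$. The cost is that the lemma now rests on the positive differential decomposition of Theorem~\ref{thm:main}, the central new identity of the paper. The paper's argument is elementary and independent of Sections~\ref{sec:local}--\ref{sec:differential}. It also delivers $h_{i,j}\le h_{i-1,j}+h_{i,j-1}$ as a by-product rather than needing the stronger normalized inequalities as input.
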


\begin{proof}
Expand $\tr M_W$ over cyclic state strings
$c_0,c_1,\ldots,c_k\in\{0,1\}$ with $c_k=c_0$.  Every matrix entry of
$C_0$ and $C_1$ is either $0$, $1$, $u$, or $v$, so this expansion has
nonnegative integer coefficients.  The particular cyclic state string
\[
 c_i=1-\varepsilon_i\qquad(0\le i<k)
\]
is always admissible.  Indeed, the entry
$(C_{\varepsilon_i})_{c_i,c_{i+1}}$ is $u$ when
$\varepsilon_{i+1}=1$ and is $v$ when $\varepsilon_{i+1}=0$, with indices
read cyclically.  Its product is therefore $u^rv^z$.  Thus the trace contains
at least one copy of $u^rv^z$, and subtracting the single copy in
\eqref{Qdef} leaves a polynomial with nonnegative integer coefficients.

We have $Q_W(1/2,1/2)=1$ from \eqref{Hdef}. Finally, at $u=v=0$ the two matrices are the complementary projections
\[
 C_0(0,0)=\begin{bmatrix}1&0\\0&0\end{bmatrix},\qquad
 C_1(0,0)=\begin{bmatrix}0&0\\0&1\end{bmatrix}.
\]
Because $W$ is nonconstant, a cyclic rotation of its product contains one of
the adjacent factors
\[
 C_0(0,0)C_1(0,0),\qquad C_1(0,0)C_0(0,0),
\]
both of which are zero.
Thus $Q_W(0,0)=0$.  Substituting $u=v=0$ in \eqref{Hdef} gives $h_{0,0}(W)=1$.
\end{proof}

For a nonconstant cyclic word $W$, write
\begin{equation}\label{sigmadef}
 \sigma^+(W):=\sum_{e\ge0}[x^e]\,Q_W(x/2,x^{-1}/2),
 \qquad
 \sigma^-(W):=\sum_{e<0}[x^e]\,Q_W(x/2,x^{-1}/2)
\end{equation}
for the total coefficient of the nonnegative, respectively negative, powers of
$x$ after the substitution $u=x/2$, $v=x^{-1}/2$.  By
Lemma~\ref{lem:Qpositive},
\begin{equation}\label{sigmasum}
 \sigma^+(W)+\sigma^-(W)=Q_W(1/2,1/2)=1 ,
\end{equation}
so a lower bound for $\sigma^+(W)$ is the same thing as an upper bound for
 $\sigma^-(W)$.
 
\begin{lemma}[Exact median identity]\label{lem:median}
For every nonconstant cyclic word $W$,
\begin{equation}\label{median}
 \sigma^+(W)
 =\frac12+\sum_{m\ge1}2^{-2m-1}\Delta_m^-(W),
 \qquad
 \Delta_m^-(W)=2h_{m-1,m}(W)-h_{m,m}(W).
\end{equation}
\end{lemma}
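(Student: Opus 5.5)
The plan is a direct coefficient computation. I substitute $u=x/2$, $v=x^{-1}/2$ into $Q_W=1+(u+v-1)H_W$ from \eqref{Hdef}, sort the resulting monomials by the sign of their exponent of $x$, and use $h_{0,0}(W)=1$ from Lemma~\ref{lem:Qpositive} to absorb the constant term. Under this substitution the monomial $u^iv^j$ becomes $2^{-i-j}x^{i-j}$. Hence, writing $q_{i,j}$ for the coefficient of $u^iv^j$ in $Q_W$, definition \eqref{sigmadef} gives
\[
 \sigma^+(W)=\sum_{i\ge j\ge0}2^{-i-j}q_{i,j}.
\]
All sums are finite because $Q_W$ is a polynomial, so regrouping terms is harmless.

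First I expand $Q_W$ coefficientwise from \eqref{Hdef}, with the convention $h_{a,b}=0$ when $a<0$ or $b<0$:
\[
 q_{i,j}=[i=j=0]+h_{i-1,j}(W)+h_{i,j-1}(W)-h_{i,j}(W).
\]
The constant term contributes exactly $1$ to $\sigma^+(W)$. Next I exchange the order of summation and record the total weight that each $h_{a,b}$ receives in $\sum_{i\ge j}2^{-i-j}q_{i,j}$.
\begin{itemize}
\item The term $h_{i-1,j}$ with $(a,b)=(i-1,j)$ contributes $2^{-a-b-1}$ exactly when $a+1\ge b$.
\item The term $h_{i,j-1}$ with $(a,b)=(i,j-1)$ contributes $2^{-a-b-1}$ exactly when $a\ge b+1$.
\item The term $-h_{i,j}$ contributes $-2^{-a-b}$ exactly when $a\ge b$.
\end{itemize}
The total weight of $h_{a,b}$ is therefore
\[
 2^{-a-b}\Bigl(\tfrac12[a\ge b-1]+\tfrac12[a\ge b+1]-[a\ge b]\Bigr).
\]
This weight vanishes for $a\ge b+1$ (since $\tfrac12+\tfrac12-1=0$) and for $a\le b-2$. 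Only two diagonals survive: $a=b=m$ with weight $-2^{-2m-1}$, and $(a,b)=(m-1,m)$ with weight $2^{-2m}$.

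It then follows that
\[
 \sigma^+(W)=1-\sum_{m\ge0}2^{-2m-1}h_{m,m}(W)+\sum_{m\ge1}2^{-2m}h_{m-1,m}(W).
\]
The $m=0$ term of the first sum is $-\tfrac12 h_{0,0}(W)=-\tfrac12$ by Lemma~\ref{lem:Qpositive}. This is the only place where nonconstancy of $W$ enters. Combining the remaining terms for each $m\ge1$ as $2^{-2m-1}\bigl(2h_{m-1,m}-h_{m,m}\bigr)$ yields \eqref{median}.

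There is no real obstacle here. The only delicate point is the boundary bookkeeping in the weight computation: the conditions $i\ge j$ translate into the three shifted conditions on $(a,b)$ listed above, and the edge cases $a=b$ and $a=b-1$ must be handled correctly. As a sanity check, summing over all $(i,j)$ instead of $i\ge j$ gives total weight $0$ for every $h_{a,b}$, consistent with $Q_W(1/2,1/2)=1$ in \eqref{sigmasum}.
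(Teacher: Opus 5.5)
Your proof is correct and is essentially the paper's argument: both compute, for each coefficient $h_{a,b}(W)$, its net contribution to the nonnegative-exponent mass of $Q_W(x/2,x^{-1}/2)$, find that this contribution vanishes unless $a=b$ or $b=a+1$, and then use $h_{0,0}(W)=1$ to reduce the constant $1$ to $\tfrac12$. The only difference is cosmetic: you pass through the coefficients $q_{i,j}$ of $Q_W$ and exchange summations, whereas the paper directly tracks the three shifted exponents $a-b+1$, $a-b-1$, $a-b$ coming from $uH_W$, $vH_W$ and $-H_W$.
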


\begin{proof}
After the substitution $u=x/2$, $v=x^{-1}/2$, a monomial $h_{i,j}u^iv^j$ of
$H_W$ contributes to $Q_W=1+(u+v-1)H_W$ through the three terms $uH_W$, $vH_W$
and $-H_W$, that is, at the Laurent exponents $i-j+1$, $i-j-1$ and $i-j$, with
the respective weights
\[
 2^{-(i+j)-1}h_{i,j},\qquad
 2^{-(i+j)-1}h_{i,j},\qquad
 -2^{-(i+j)}h_{i,j}.
\]
Four cases occur.  If $i-j\ge1$, all three exponents are nonnegative and the
three contributions to $\sigma^+(W)$ cancel.  If $i=j$, the exponent $i-j-1=-1$
is negative, so the net contribution is
$2^{-2i-1}h_{i,i}-2^{-2i}h_{i,i}=-2^{-2i-1}h_{i,i}$.  If $j=i+1$, only the
exponent $i-j+1=0$ is nonnegative, so the net contribution is
$2^{-(2i+1)-1}h_{i,i+1}=2^{-2i-2}h_{i,i+1}$.  If $j\ge i+2$, all three
exponents are negative and the contribution is zero.  Finally the constant $1$
in $Q_W=1+(u+v-1)H_W$ contributes $1$, which the $i=j=0$ term
$-2^{-1}h_{0,0}=-1/2$ reduces to $1/2$.  Summing, and reindexing the
adjacent-diagonal terms by $m=i+1$, gives \eqref{median}.
\end{proof}

Equation \eqref{diag} gives
\begin{equation}\label{weak}
 \Delta_m^-(W)\ge0\qquad(m\ge1).
\end{equation}

\begin{theorem}[Top-boundary formula]\label{prop:topboundary}
After a cyclic rotation, write
\[
 W=1^{a_1}0^{b_1}\cdots1^{a_p}0^{b_p},
 \qquad a_\nu,b_\nu\ge1.
\]
Then
\begin{equation}\label{topboundary}
 h_{r-1,j}(W)
 =[y^j]\prod_{\nu=1}^p(1+y+\cdots+y^{b_\nu})
 \quad(0\le j<z),
 \qquad h_{r,j}(W)=0.
\end{equation}
\end{theorem}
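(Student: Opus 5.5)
The plan is to read off the top two $u$-layers of $H_W$ from those of $Q_W$, using $Q_W=1+(u+v-1)H_W$. I would obtain the $u$-layers of $Q_W$ by expanding $\tr M_W$ over cyclic state strings, as in the proof of Lemma~\ref{lem:Qpositive}.

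\emph{Step 1 (from $Q_W$ to $H_W$).} Write $[u^d]$ for the coefficient of $u^d$ as a polynomial in $v$. Comparing coefficients of $u^d$ in $Q_W=1+(u+v-1)H_W$ gives, for $d\ge1$,
\[
 [u^d]Q_W=[u^{d-1}]H_W+(v-1)[u^d]H_W .
\]
Suppose $\deg_uQ_W\le r$. If $H_W$ had $u$-degree $e\ge r$, then $[u^{e+1}]Q_W=[u^e]H_W\ne0$ would contradict this bound. Hence $[u^r]H_W=0$, which is $h_{r,j}=0$. Then $[u^{r-1}]H_W=[u^r]Q_W=[u^r]\tr M_W-v^z$. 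So \eqref{topboundary} reduces to two claims: $\deg_u\tr M_W\le r$, and
\[
 [u^r]\tr M_W=\prod_{\nu=1}^p(1+v+\cdots+v^{b_\nu}).
\]
Since this product has degree $z$ with leading coefficient $1$, subtracting $v^z$ leaves exactly the stated coefficients for $0\le j<z$.

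\emph{Step 2 (weights of cyclic state strings).} Expand $\tr M_W=\sum\prod_i(C_{\varepsilon_i})_{c_i,c_{i+1}}$ over cyclic strings $c$. The nonzero entries are as follows.
\begin{itemize}
\item For $C_1$: $0\to0$ has weight $u$, $0\to1$ has weight $v$, and $1\to1$ has weight $1$.
\item For $C_0$: $0\to0$ has weight $1$, $1\to0$ has weight $u$, and $1\to1$ has weight $v$.
\end{itemize}
Thus a factor $u$ arises either from a $1$-position in state $0$ that stays at $0$, or from a $1\to0$ drop, which can only occur at a $0$-position. Rises $0\to1$ occur only at $1$-positions in state $0$ and carry $v$. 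Cyclically, the number of drops equals the number of rises. Therefore
\[
 \#u=r-\#\{\text{$1$-positions in state $1$}\}\le r,
\]
which proves the degree bound. Equality holds exactly when no $1$-position is passed in state $1$.

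\emph{Step 3 (enumerating the extremal strings).} I expect this step to be the main obstacle, because the cyclic closure must be checked carefully and the product must come out with neither double counting nor omission. An extremal string must be in state $0$ at every $1$-position. A rise can therefore only happen at the last $1$ of a block $1^{a_\nu}$, since otherwise the next $1$ would be entered in state $1$. The string must also return to $0$ at or before the last $0$ of the following block $0^{b_\nu}$, because the next $1$-block must be entered in state $0$.

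So, independently for each $\nu$, there are two options.
\begin{itemize}
\item No rise: the string stays at $0$ throughout the $0$-block, contributing weight $1$.
\item A rise with factor $v$, followed by staying at $1$ for $\ell$ zeros ($0\le\ell\le b_\nu-1$, factors $v^\ell$) and then a drop with factor $u$: this contributes $v^{\ell+1}$ times the $u$ that is already counted in $r$.
\end{itemize}
The per-block generating function is therefore $1+v+\cdots+v^{b_\nu}$. Choices in different blocks are independent and automatically close up cyclically, because each block begins and ends in state $0$. Multiplying over $\nu$ gives the product, and Step 1 then yields \eqref{topboundary}.
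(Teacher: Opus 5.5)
Your proof is correct and follows essentially the same route as the paper. Both expand $\tr M_W$ over cyclic state strings, show that the $u$-degree is at most $r$ with equality only when every $1$ is read in state $0$, enumerate the extremal strings run-pair by run-pair to obtain $\prod_\nu(1+v+\cdots+v^{b_\nu})$, and then use $Q_W=1+(u+v-1)H_W$ to force $\deg_u H_W\le r-1$, so that $[u^{r-1}]H_W=[u^r]Q_W$. The only difference is bookkeeping: the paper groups letters into the run matrices $C_1^{a_\nu}$ and $C_0^{b_\nu}$ and bounds the $u$-degree of each entry, whereas your letter-level rise/drop count gives the exact identity $\#u=r-\#\{1\text{-positions in state }1\}$, which is slightly cleaner.
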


\begin{proof}
Cyclic invariance of the trace permits the displayed run decomposition.  Define
\[
 F_a=C_1^a=
 \begin{bmatrix}
  u^a&v(1+u+\cdots+u^{a-1})\\0&1
 \end{bmatrix},
 \qquad
 G_b=C_0^b=
 \begin{bmatrix}
  1&0\\u(1+v+\cdots+v^{b-1})&v^b
 \end{bmatrix}.
\]
Expand
$\tr(F_{a_1}G_{b_1}\cdots F_{a_p}G_{b_p})$ over states
$s_\nu,t_\nu\in\{0,1\}$, where $s_\nu$ is the state entering the
$\nu$th one-run, $t_\nu$ is the state entering the following zero-run, and
$s_{p+1}=s_1$.  A nonzero contribution has
$s_\nu\le t_\nu$ in $F_{a_\nu}$ and
$t_\nu\ge s_{\nu+1}$ in $G_{b_\nu}$.

For a monomial chosen from the entry
$(F_{a_\nu})_{s_\nu,t_\nu}$, its $u$-degree is at most
\[
 a_\nu(1-s_\nu)-(t_\nu-s_\nu).
\]
This is an equality for the sole entry $0\to0$; for $0\to1$ it is the
largest exponent $a_\nu-1$ in
$v(1+u+\cdots+u^{a_\nu-1})$; and for $1\to1$ both sides are zero.
A monomial from $(G_{b_\nu})_{t_\nu,s_{\nu+1}}$ has $u$-degree at most
$t_\nu-s_{\nu+1}$, with equality whenever a $u$ occurs.  Summing these
bounds around the cycle gives
\begin{align*}
 \deg_u(\text{chosen monomial})
 &\le \sum_{\nu=1}^p
 \bigl(a_\nu(1-s_\nu)+s_\nu-s_{\nu+1}\bigr)\\
 &=r-\sum_{\nu=1}^p a_\nu s_\nu\le r.
\end{align*}
Thus $\deg_u\tr M_W\le r$, and hence $\deg_uQ_W\le r$.

Equality in the $u$-degree can hold only when every $s_\nu=0$ and each local
bound is attained.  For a fixed run pair there are then two possibilities.
If $t_\nu=0$, the top-$u$ contribution is simply $u^{a_\nu}$.  If
$t_\nu=1$, the top-$u$ term from the one-run is
$u^{a_\nu-1}v$, while the zero-run contributes
$u(1+v+\cdots+v^{b_\nu-1})$.  Their product is
\[
 u^{a_\nu}(v+v^2+\cdots+v^{b_\nu}).
\]
The choices for the $p$ run pairs are independent because each begins and ends
in state $0$.  Therefore
\[
 [u^r]\tr M_W
 =\prod_{\nu=1}^p(1+v+\cdots+v^{b_\nu}).
\]
Since $Q_W=\tr M_W-u^rv^z$, it follows that
\[
 [u^r]Q_W
 =\prod_{\nu=1}^p(1+v+\cdots+v^{b_\nu})-v^z.
\]

It remains to pass from $Q_W$ to $H_W$.  If $d=\deg_uH_W$, then the term
$uH_W$ in $Q_W=1+(u+v-1)H_W$ has $u$-degree $d+1$, whereas
$(v-1)H_W$ has $u$-degree at most $d$.  Hence $\deg_uQ_W\le r$ forces
$d\le r-1$.  Thus $h_{r,j}(W)=0$ for every $j$, and
\[
 [u^rv^j]Q_W=h_{r-1,j}(W).
\]
For $0\le j<z$, the subtracted term $v^z$ does not contribute, giving exactly
\eqref{topboundary}.
\end{proof}

Suppose $2r<k$, so that $r<z$.  Each factor $1+y+\cdots+y^{b_\nu}$ of the
product in \eqref{topboundary} has all its coefficients in the degrees
$0,1,\ldots,b_\nu$ equal to $1$; hence the product
$\prod_{\nu=1}^p(1+y+\cdots+y^{b_\nu})$ has \emph{every} coefficient in the
degrees $0,1,\ldots,z$ at least $1$ (indeed $\sum_\nu b_\nu=z$, and a coefficient
of $y^j$ with $0\le j\le z$ is the number of ways of writing
$j=j_1+\cdots+j_p$ with $0\le j_\nu\le b_\nu$, which is at least one).  Since
$r<z$, the degree $j=r$ lies in the range $0\le j<z$ of validity of
\eqref{topboundary}, and therefore
\begin{equation}\label{strictdefect}
 h_{r-1,r}(W)\ge1,\qquad h_{r,r}(W)=0,
 \qquad \Delta_r^-(W)=2h_{r-1,r}(W)-h_{r,r}(W)\ge2.
\end{equation}

\begin{theorem}[Tu--Deng conjecture]\label{thm:TD}
For every $k\ge2$ and every $1\le t<2^k-1$,
\[
 |D_{t,k}|\le2^{k-1}.
\]
By Theorem~\ref{paircount}, the Tu--Deng Conjecture~\ref{TD} holds.
\end{theorem}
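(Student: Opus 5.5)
The plan is to read off $|D_{t,k}|/2^k$ as the negative-power mass in the cyclic transfer identity (Theorem~\ref{cyclictransfer}), and then bound that mass with the median identity (Lemma~\ref{lem:median}) together with the diagonal inequality \eqref{diag}.

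\textbf{Setup.} Fix $k\ge2$ and $1\le t<M$, and let $W$ be the bit word of $t$. Since $t\neq0$ and $t\neq M$, the word $W$ is nonconstant. Hence Theorem~\ref{thm:normal}, Lemma~\ref{lem:Qpositive}, Lemma~\ref{lem:median} and Theorem~\ref{prop:topboundary} all apply.

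\textbf{Negative-power mass of the trace.} Under the substitution $u=x/2$, $v=x^{-1}/2$ we have $u^rv^z=2^{-k}x^{r-z}=2^{-k}x^{2r-k}$. So by \eqref{traceQ} the left side of \eqref{eq:A} equals $Q_W(x/2,x^{-1}/2)+2^{-k}x^{2r-k}$. Its total negative-power coefficient is therefore
\[
\sigma^-(W)+2^{-k}\,[\,2r<k\,].
\]
On the right side of \eqref{eq:A}, the term $2^{-k}x^r$ has exponent $r\ge0$, so the negative mass there is $|D_{t,k}|/2^k$, as stated in Theorem~\ref{cyclictransfer}. Comparing the two sides gives
\[
|D_{t,k}|=2^k\sigma^-(W)+[\,2r<k\,].
\]

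\textbf{Bounding $\sigma^-(W)$.} By \eqref{sigmasum} and \eqref{median},
\[
\sigma^-(W)=\tfrac12-\sum_{m\ge1}2^{-2m-1}\Delta_m^-(W).
\]
By \eqref{weak}, which comes from \eqref{diag}, i.e.\ the case $i=j=m$ of \eqref{normu}, every $\Delta_m^-(W)\ge0$. We then split into two cases.

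\emph{Case $2r\ge k$.} The indicator vanishes and $\sigma^-(W)\le\tfrac12$. Hence $|D_{t,k}|\le2^{k-1}$.

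\emph{Case $2r<k$.} Here $r\ge1$ because $t\ge1$, so \eqref{strictdefect} applies at $m=r$ and gives $\Delta_r^-(W)\ge2$. Keeping only that term of the sum,
\[
\sigma^-(W)\le\tfrac12-2\cdot2^{-2r-1}=\tfrac12-2^{-2r}.
\]
Therefore
\[
|D_{t,k}|\le2^{k-1}-2^{k-2r}+1\le2^{k-1},
\]
since $k-2r\ge1$ makes $2^{k-2r}\ge2$.

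Finally, Theorem~\ref{paircount} transfers the bound $|D_{t,k}|\le2^{k-1}$ to $|S_{t,k}|$, which proves Conjecture~\ref{TD}.

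\textbf{Main obstacle.} The main obstacle is the bookkeeping in the case $2r<k$. There the subtracted term $u^rv^z$ lands at a negative exponent and contributes the extra $2^{-k}$ to the negative mass. This is exactly why the strict defect \eqref{strictdefect} from the top-boundary formula is needed on top of the weak inequalities \eqref{weak}. The sign and size comparison $2^{-2r}\ge2^{-k}$ has to be checked carefully.
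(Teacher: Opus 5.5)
Your proposal is correct and follows the paper's proof essentially step for step. You use the same split into the cases $2r\ge k$ and $2r<k$; \eqref{weak} and Lemma~\ref{lem:median} give $\sigma^-(W)\le\frac12$, and the strict defect \eqref{strictdefect} absorbs the extra $2^{-k}$ from the determinant term. The only cosmetic difference is that you write the count exactly as $|D_{t,k}|=2^k\sigma^-(W)+[\,2r<k\,]$, which makes the bookkeeping explicit and even gives $|D_{t,k}|\le2^{k-1}-1$ when $2r<k$.
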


\begin{proof}
Since $1\le t<2^k-1$, we have $1\le r\le k-1$, so $W$ is nonconstant.
By \eqref{weak} and Lemma~\ref{lem:median},
\[
 \sigma^+(W)\ge\frac12.
\]
By Lemma~\ref{lem:Qpositive}, the Laurent coefficients of
$Q_W(x/2,x^{-1}/2)$ are nonnegative and have total mass $1$.  Thus, by
\eqref{sigmasum}, the negative Laurent mass $\sigma^-(W)$ is at most $1/2$.

If $2r\ge k$, the determinant term $2^{-k}x^{2r-k}$ in \eqref{traceQ} has nonnegative exponent and adds no negative mass.  Equation \eqref{eq:A} then gives \eqref{TDtarget}.

Suppose $2r<k$.  By \eqref{strictdefect}, the $m=r$ summand in \eqref{median} contributes at least
\[
 2^{-2r-1}\Delta_r^-(W)\ge2^{-2r}
\]
beyond $1/2$.  Hence the negative mass $\sigma^-(W)$ of $Q_W$ is at most
\[
 \frac12-2^{-2r}.
\]
The determinant term now has a negative exponent, and so contributes $2^{-k}$
to the negative mass.  Since $2r<k$,
\[
 2^{-k}<2^{-2r}.
\]
Therefore the total negative mass in \eqref{traceQ} remains below $1/2$.  The trace identity \eqref{eq:A} again gives $|D_{t,k}|\le2^{k-1}$.
\end{proof}

\end{document}